\newtheorem{theorem}{Theorem}[section]
\newtheorem{corollary}[theorem]{Corollary}
\newtheorem{proposition}[theorem]{Proposition}
\newtheorem{conjecture}[theorem]{Conjecture}
\newtheorem{question}[theorem]{Question}
\newtheorem{problem}[theorem]{Problem}
\newtheorem{observation}{Observation}
\newcommand{\CFP}{\chi_{\rm pcf}}
\newcommand{\PCF}{\chi_{\rm pcf}}
\newcommand{\CF}{\chi_{\rm cf}}
\newcommand{\SK}{{\rm SK}}
\author
{
    Yair Caro \thanks{University of Haifa-Oranim, Israel. E-mail: \texttt{yacaro@kvgeva.org.il}},
    \quad
	Mirko Petru\v{s}evski \thanks{Department of Mathematics and Informatics, Faculty of Mechanical Engineering - Skopje, Republic of North Macedonia. E-Mail: \texttt{mirko.petrushevski@gmail.com}},
	\quad
	Riste \v{S}krekovski\thanks{FMF, University of Ljubljana \& Faculty of Information Studies, Novo mesto, Slovenia. E-Mail: \texttt{skrekovski@gmail.com}}
}
\begin{document}
\title{Remarks on proper conflict-free colorings of graphs}

\maketitle

\begin{abstract}
 A vertex coloring of a graph is said to be \textit{conflict-free} with respect to neighborhoods if for every non-isolated vertex there is a color appearing exactly once in its (open) neighborhood. As defined in [Fabrici et al., \textit{Proper Conflict-free and Unique-maximum Colorings of Planar Graphs with Respect to Neighborhoods}, arXiv preprint], the minimum number of colors in any such proper coloring of graph $G$ is the PCF chromatic number of $G$, denoted $\CFP(G)$. In this paper, we determine the value of this graph parameter for several basic graph classes including trees, cycles, hypercubes and subdivisions of complete graphs. We also give upper bounds on $\CFP(G)$ in terms of other graph parameters. In particular, we show that $\CFP(G) \leq5\Delta(G)/2$ and characterize equality. Several sufficient conditions for PCF $k$-colorability of graphs are established for $4\le k\le 6$. The paper concludes with few open problems.
\end{abstract}

\medskip

\noindent \textbf{Keywords:} conflict-free coloring, proper coloring, neighborhood, PCF chromatic number, planar graph.

\section{Introduction}

All considered graphs in this paper are simple, finite and undirected. We follow~\cite{BonMur08} for all terminology and notation not defined here. A $k$-(vertex-)coloring of a graph $G$ is an assignment $\varphi: V(G)\to\{1,\ldots,k\}$. A coloring $\varphi$ is said to be \textit{proper} if every color class is an independent subset of the vertex set of $G$. A \textit{hypergraph} $\mathcal{H}=(V(\mathcal{H}),\mathcal{E}(\mathcal{H}))$ is a generalization of a graph, its (hyper-)edges are subsets of $V(\mathcal{H})$ of arbitrary positive size. There are various notions of (vertex-)coloring of hypergraphs, which when restricted to graphs coincide with proper graph coloring. One such notion was introduced by Even at al.~\cite{EveLotRonSmo03} (in a geometric setting) in connection with frequency assignment problems for cellular networks, as follows. A coloring of a hypergraph $\mathcal{H}$ is \textit{conflict-free} \textit{(CF)} if for every edge $e\in \mathcal{E}(\mathcal{H})$ there is a color $c$ that occurs exactly once on the vertices of $e$. The \textit{CF chromatic number} of $\mathcal{H}$ is the minimum $k$ for which $\mathcal{H}$ admits a CF $k$-coloring. For graphs, Cheilaris~\cite{Che09} studied the \textit{CF coloring with respect to neighborhoods}, that is, the coloring in which for every non-isolated vertex $x$ there is a color that occurs exactly once in the (open) neighborhood $N(x)$, and proved the upper bound $2\sqrt{n}$ for the CF chromatic number of a graph of order $n$. For more on not necessarily proper CF colorings see, e.g., \cite{CheTot11, GleSzaTar14, KosKumLuc12, PacTar09, Smo13}. Quite recently, Fabrici et al.~\cite{FabLuzRinSot22} initiated a study of proper conflict-free colorings with respect to neighborhoods while focusing mainly on planar and outerplanar graphs. In fact, combining the coloring notions of `conflict-free' and `proper' is only natural as the former was initially introduced (in the hypergraph setting) in order to generalize the latter.

The minimum number of colors in any proper conflict-free coloring (with respect to neighborhoods) of a graph $G$ is the \textit{PCF chromatic number} of $G$, denoted $\PCF(G)$. Note that the obvious inequality $\chi(G)\leq\PCF(G)$ becomes an equality for every complete graph. On the other hand, the mentioned inequality may also be strict. In fact, the ratio $\PCF(G)/\chi(G)$ can acquire arbitrarily high value. Indeed, consider a non-empty graph $G$ and let $S(G)$ be \textit{the complete subdivision} of $G$, i.e., the graph obtained from $G$ by subdividing every edge in $E(G)$ exactly once. Then $\PCF(S(G))\geq\chi(G)$ whereas $\chi(S(G))=2$. Consequently, $\PCF(S(G))/\chi(S(G))\geq\chi(G)/2$.

On the other hand, the ratio $\PCF(G)/\chi(G)$ is bounded from above by the \textit{CF chromatic number} of $G$, denoted $\CF(G)$, which is the minimum number of colors in any (not necessarily proper) conflict-free coloring of $G$ (with respect to neighborhoods). Indeed, in order to show that $\PCF(G) \le \CF(G)\, \chi(G)$ simply take $f$ to be a conflict-free coloring that realizes $\CF(G)$ and $g$ be a proper coloring the realizes $\chi(G)$. Then  $h=(f,g)$ is a conflict-free coloring by the first coordinate and a proper coloring by the second coordinate, and it uses $\CF(G)\, \chi(G)$ colors.

Note in passing a fundamental distinction between the chromatic number and the PCF chromatic number. The former graph parameter is monotonic in regard to the `subgraph relation', that is, if $H\subseteq G$ then $\chi(H)\leq\chi(G)$. This nice monotonicity feature does not hold for the PCF chromatic number in general. For example, $C_4$ is a subgraph of the kite $K_4-e$, but nevertheless we have $\PCF(C_4)=4>3=\PCF(K_4-e)$.

\smallskip

We bring this introductory section to an end by mentioning another related and recently introduced coloring concept for graphs, so-called `odd coloring'. The motivation came from another notion of (vertex-)coloring of hypergraphs, which when restricted to graphs coincides with proper graph coloring. Namely, as introduced by Cheilaris et al.~\cite{CheKesPal13}, an \textit{odd coloring} of hypergraph $\mathcal{H}$ is a coloring such that for every edge $e\in \mathcal{E}(\mathcal{H})$ there is a color $c$ with an odd number of vertices of $e$ colored by $c$. Particular features of the same notion notion (under the name \textit{weak-parity coloring}) have been considered by Fabrici and G\"{o}ring~\cite{FabGor16} (in regard to face-hypergraphs of planar graphs) and also by Bunde et al.~\cite{BunMilWesWu07} (in regard to coloring of graphs with respect to paths, i.e., path-hypergraphs). For various edge colorings of graphs with parity condition required at the vertices we refer the reader to~\cite{AtaPetSkr16, Pet15, KanKatVar18, LuzPetSkr15, LuzPetSkr18, Pet18, PetSkr21}.

As defined in~\cite{PetSkr22}, a proper coloring of a graph $G$ is {\em odd} if in the open
neighborhood $N(v)$ of every non-isolated vertex $v$ a color appears an odd number of times.
The minimum number of colors in any odd coloring of $G$ is the \textit{odd chromatic number} of $G$, denoted $\chi_o(G)$. This new graph parameter spurred instant interest among graph theorists (see~\cite{Cra22,CarPetSkr22,PetPor22,CraLafSon22,ChoChoKwoPar22,DujMorOda22}). Clearly $\chi(G)\leq\chi_o(G)\leq\PCF(G)$, where the latter inequality comes from the obvious fact that every proper conflict-free coloring is odd. The following was conjectured in~\cite{CarPetSkr22}.

\begin{conjecture}
    \label{conj.D+1}
If $G$ is a connected graph of maximum degree $\Delta\geq3$, then $\chi_o(G)\leq \Delta+1$.
\end{conjecture}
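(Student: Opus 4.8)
The plan is to reduce the statement to a purely parity-driven repair problem and then to control the repairs locally. First I would record the elementary but crucial observation that the odd condition is vacuous at every vertex of odd degree: since the color counts in $N(v)$ sum to $\deg(v)$, if $\deg(v)$ is odd then these counts cannot all be even, so some color already appears an odd number of times. Hence, in any proper coloring, only the even-degree vertices can fail, and it suffices to produce a proper $(\Delta+1)$-coloring in which every even-degree vertex sees some color an odd number of times in its neighborhood.

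Since $\Delta\ge 3$ and $G$ is connected, a proper $(\Delta+1)$-coloring exists trivially (indeed Brooks' theorem yields a proper $\Delta$-coloring in all but the complete and odd-cycle cases, leaving a spare color). Starting from such a coloring, I would repair the \emph{bad} even-degree vertices---those at which every color occurs an even number of times---by a single local move: if $v$ is bad and $u\in N(v)$ is colored $a$, recoloring $u$ to a color $b$ flips the parity of both $a$ and $b$ in $N(v)$, so the count of $a$ in $N(v)$ becomes odd and $v$ is satisfied. Because $\deg(u)\le\Delta$ and there are $\Delta+1$ colors, a properly available target $b\ne a$ can always be found; the delicate point is that this recoloring also perturbs the parities in $N(w)$ for every $w\in N(u)$, so it may turn a previously satisfied vertex bad.

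The heart of the argument is therefore to orchestrate these single-vertex flips so that they do not undo one another. I would set this up through a potential/discharging framework: assign to each configuration a potential counting the bad even-degree vertices (weighted so that creating a new bad vertex is never cheaper than removing one), select repair moves that strictly decrease this potential, and argue termination. A cleaner alternative to pursue in parallel is the algebraic route: encode properness by the graph polynomial $\prod_{uv\in E}(x_u-x_v)$ and the even-degree condition over a field of characteristic two---the bad event at $v$ is exactly that $\sum_{u\in N(v)} e_{\varphi(u)}=0$ in $\mathbb{F}_2^{\,\Delta+1}$---and then aim to show, via the Combinatorial Nullstellensatz with all variable exponents at most $\Delta$, that the combined polynomial does not vanish identically. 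A purely probabilistic attack (Lov\'asz Local Lemma or entropy compression with bad events `monochromatic edge' and `all-even neighborhood') is also natural, but it appears to lose the tight constant, giving only $O(\Delta)$.

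I expect the main obstacle to be precisely the tightness of the bound: with a single spare color there is almost no slack, so the local repair move has very limited freedom and the cascade of induced parity changes is hard to damp---controlling that cascade is exactly the content of the conjecture. A secondary difficulty is that $\chi_o$ is not monotone under the subgraph relation (as already observed for $\PCF$), which obstructs the usual reduction to a vertex-minimal counterexample and forces any inductive scheme to track degrees and parities simultaneously rather than simply deleting vertices.
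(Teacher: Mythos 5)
The statement you were given is Conjecture~\ref{conj.D+1}: the paper states it as an open conjecture (imported from an earlier paper of the same authors) and offers no proof of it --- indeed it goes on to pose a strictly stronger conjecture at the end. So there is nothing in the paper to compare against, and your text is, by its own admission, a research plan rather than a proof. Your preliminary reductions are sound: the parity condition is indeed vacuous at odd-degree vertices, and a proper $(\Delta+1)$-coloring always exists for $\Delta\ge 3$. But each of the three routes you sketch stalls exactly where the content of the conjecture lies. In the local-repair route you never exhibit the potential function nor a move that is guaranteed to decrease it: recoloring a single neighbor $u$ of a bad vertex alters the parity pattern in $N(w)$ for every $w\in N(u)$, so one repair can create up to $\Delta$ new bad vertices while fixing one, and no termination argument is given (nor is it clear that a suitable weighting exists --- controlling this cascade with a single spare color is precisely the open problem). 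In the Combinatorial Nullstellensatz route there is a concrete degree obstruction: the graph polynomial $\prod_{uv\in E(G)}(x_u-x_v)$ already has degree $\deg_G(v)$ in the variable $x_v$, which can equal $\Delta$ and thus exhausts the entire exponent budget permitted by color lists of size $\Delta+1$; multiplying in any nontrivial polynomial encoding the parity constraints pushes some exponent past $\Delta$, so the method cannot be applied as described. You concede yourself that the probabilistic route only yields $O(\Delta)$.

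In short, you correctly isolate where the difficulty sits, but none of the proposed strategies is carried past that difficulty; the proposal should be read as a discussion of the problem, not as a proof, and the conjecture remains open as stated in the paper.
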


By the end of this paper we shall be posing a considerably stronger conjecture. The rest of the article is organized as follows. The next section provides characterizations of several basic graph classes in terms of the value of the PCF chromatic number. In Section~3 we discuss PCF colorability of claw-free and chordal graphs. This is followed by a section on upper bounds for $\PCF(G)$ in degree-constrained graphs $G$. In Section~5 we turn to PCF $k$-colorability where $4\le k\le6$. At the end, we briefly convey some of our thoughts for possible further work on the subject of proper conflict-free colorings.

\section{Basic results}
Here we determine the proper conflict-free chromatic number of some basic/simple graph classes and study the behavior of this graph parameter under certain standard graph constructions. Since this is a relatively new graph parameters, we believe that some of these observations will be useful in future study.

\subsection{Trees, cycles and cubes}
First we characterize trees in terms of the PCF chromatic number.

\begin{observation}
    \label{tree}
Let $T$ be a non-trivial tree. Then,
\begin{equation}
    \label{trees}
\PCF(T)=
\begin{cases}
2 & \text{\quad if \,} T=K_2\,;\\
3 & \text{\quad if \,} {\rm otherwise}\,;\\
\end{cases}
\end{equation}
\end{observation}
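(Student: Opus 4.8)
The plan is to handle the two cases separately, proving the bound $\PCF(T)\le 3$ for every nontrivial tree by an explicit construction and establishing the matching lower bound for trees other than $K_2$ by a short parity argument.

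The case $T=K_2$ is immediate: properness already forces at least $\chi(K_2)=2$ colors, and the proper $2$-coloring assigning distinct colors to the two endpoints is trivially conflict-free, since each vertex has a single neighbor. Hence $\PCF(K_2)=2$. Now suppose $T$ has at least three vertices. To see $\PCF(T)\ge 3$, note that $T$ is bipartite, so $\chi(T)=2$; thus if $\PCF(T)$ were below $3$ it would equal $2$, realized by some proper $2$-coloring. But any tree on at least three vertices contains a vertex $v$ with $\deg(v)\ge 2$, and in a proper $2$-coloring every neighbor of $v$ receives the color opposite to $\varphi(v)$. Consequently that single color occupies all of $N(v)$ and appears at least twice, so no color occurs exactly once in $N(v)$, contradicting the conflict-free condition. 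Therefore $\PCF(T)\ge 3$.

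For the upper bound $\PCF(T)\le 3$ I would produce a proper conflict-free $3$-coloring by a top-down greedy scheme. Root $T$ at a leaf $r$ (which exists since $T$ is nontrivial) and process the vertices in nondecreasing order of depth. Set $\varphi(r)=1$; for every other vertex $v$ with parent $p$, choose $\varphi(v)\in\{1,2,3\}\setminus\{\varphi(p),\varphi(g)\}$, where $g$ is the grandparent of $v$ when it exists and the forbidden set is just $\{\varphi(p)\}$ when $p=r$. Since at most two colors are excluded from a palette of three, such a choice always exists, and $\varphi(v)\ne\varphi(p)$ guarantees that the coloring is proper.

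It remains to verify the conflict-free property, which is the only genuinely delicate point; rooting at a leaf is precisely what makes it painless. For the root $r$ itself, $N(r)$ consists of its single child, so some color trivially occurs exactly once. For any non-root vertex $v$ with parent $p$, I claim $\varphi(p)$ occurs exactly once in $N(v)=\{p\}\cup\{\text{children of }v\}$: by construction each child $c$ of $v$ satisfies $\varphi(c)\ne\varphi(v)$ and $\varphi(c)\ne\varphi(p)$ (here $v$ is the parent of $c$ and $p$ is its grandparent), so no child of $v$ carries the color $\varphi(p)$, whence $p$ is the unique vertex of $N(v)$ of that color. This confirms conflict-freeness at every non-isolated vertex, so $\PCF(T)\le 3$, and combined with the lower bound we conclude $\PCF(T)=3$.
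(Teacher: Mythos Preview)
Your proof is correct. The lower bound is the same observation as in the paper (a vertex of degree at least $2$ forces a third color in its closed neighborhood), but for the upper bound you take a different route: the paper argues by induction on $|V(T)|$, deleting a leaf $u$, taking a PCF $3$-coloring of $T-u$, and then coloring $u$ by forbidding the color of its neighbor $w$ together with a uniquely occurring color in $N_{T-u}(w)$. You instead give an explicit top-down construction, rooting $T$ at a leaf and coloring each vertex to avoid its parent's and grandparent's colors. Your approach has the advantage of producing a concrete coloring and making the witness for conflict-freeness uniform (the parent's color is always the unique one), at the small cost of having to justify the choice of rooting at a leaf so that the root itself is handled. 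The paper's inductive argument is terser and avoids fixing a global root, but is less constructive. Both are standard tree techniques and equally valid here.
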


\begin{proof}
Observe that if $T=K_2$,  then $\CFP(T)=2$. Otherwise, it holds $\Delta(T)\ge 2$ and let us look at a vertex $v$ of degree $\deg(v)=\Delta(G)$. At least three colors must appear in the closed neighborhood $N[v]$ under any conflict-free proper coloring of $T$. Thus, if $T\neq K_2$ then $\PCF(T)\geq3$. To complete the argument, we show by induction on the number of vertices that every tree admits a conflict-free proper $3$-coloring.  Consider a leaf $u$, and take a PCF $3$-coloring of the smaller tree $T-u$. Let $w$ be the neighbor of $u$ in $T$. Then by forbidding at $u$ the color of $w$ and a color with unique appearance in $N_{T-u}(w)$, the coloring extends a PCF $3$-coloring of $T$.
\end{proof}

In regard to cycles, things are quite similar to the case of odd colorings. Namely,
\begin{equation}
    \label{cycles}
\CFP(C_n)= \chi_o(C_n)=
\begin{cases}
3 & \text{\quad if \,} 3 \mid n \,;\\
4 & \text{\quad if \,} 3 \nmid n \text{ and } n\neq5\,;\\
5 & \text{\quad if \,} n=5 \,.
\end{cases}
\end{equation}

Concerning hypercubes we have the following.

\begin{observation}
For $d\ge 2$, it holds $\CFP(Q_d)=4$.
\end{observation}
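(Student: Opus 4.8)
The plan is to prove the two inequalities $\CFP(Q_d)\le 4$ and $\CFP(Q_d)\ge 4$ separately. Throughout I identify $V(Q_d)$ with $\set{0,1}^d$, so that two vertices are adjacent exactly when they differ in a single coordinate; I write $e_i$ for the $i$-th standard unit vector, $\oplus$ for coordinatewise addition modulo $2$, and $\mathbf 0$ for the all-zero vertex.

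For the upper bound I would exhibit one explicit coloring with colour set $\mathbb{Z}_4=\set{0,1,2,3}$. The key idea is to single out the first coordinate and give it weight $2$: define $f(v)=2x_1+\sum_{i=2}^{d}x_i \pmod 4$ for $v=(x_1,\dots,x_d)$. Flipping coordinate $1$ changes $f$ by $\pm2$ while flipping any coordinate $i\ge2$ changes $f$ by $\pm1$; since none of these shifts is $0$ modulo $4$, the coloring is proper. For conflict-freeness, observe that the single neighbour $v\oplus e_1$ receives colour $f(v)+2$, whereas every other neighbour $v\oplus e_i$ with $i\ge2$ receives a colour in $\set{f(v)+1,f(v)+3}$; hence the colour $f(v)+2$ occurs \emph{exactly once} in $N(v)$, and this happens simultaneously at every vertex. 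Thus $f$ is a PCF $4$-coloring and $\CFP(Q_d)\le4$.

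For the lower bound I would argue by contradiction, assuming a PCF coloring $\varphi$ with colour set $\set{1,2,3}$ (that two colours never suffice is immediate, since the bipartition coloring leaves every neighbourhood monochromatic of size $d\ge2$). After relabelling, put $\varphi(\mathbf 0)=1$; properness forces the neighbours $e_1,\dots,e_d$ to avoid colour $1$, so only two colours are available on $N(\mathbf 0)$. As some colour must appear exactly once there, exactly one neighbour carries the \emph{minority} colour, and after relabelling colours and coordinates I may assume $\varphi(e_1)=3$ and $\varphi(e_i)=2$ for all $i\ge2$. Now for each $j\ge2$ the vertex $e_1\oplus e_j$ is adjacent both to $e_1$ (colour $3$) and to $e_j$ (colour $2$), so it is forced to colour $1$. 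But the neighbours of $e_1$ are precisely $\mathbf 0$ together with the vertices $e_1\oplus e_j$ for $j\ge2$, and all of these now have colour $1$. Hence colour $1$ appears $d\ge2$ times in $N(e_1)$ while no colour appears exactly once, contradicting the conflict-free condition at $e_1$. Therefore $\CFP(Q_d)\ge4$.

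I expect the substantive step to be the upper bound, and specifically the realisation that a single $2$-weighted coordinate manufactures a uniquely coloured neighbour at \emph{every} vertex at once, independent of $d$; once that coloring is in hand the verification is mechanical. The lower bound is then a short local forcing argument, whose only delicate point is justifying the two normalising relabellings (of colours and of coordinates) around $\mathbf 0$ so that the collapse of $N(e_1)$ to a single colour becomes visible.
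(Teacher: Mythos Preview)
Your proof is correct. The lower-bound argument is essentially identical to the paper's: both fix a vertex, note that properness restricts its neighbours to two colours, use conflict-freeness to isolate a single ``special'' neighbour, and then show that this special neighbour's own neighbourhood collapses to one colour via the common neighbours in the $4$-cycles of $Q_d$.

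For the upper bound the paper takes a more geometric route: it views $Q_d$ as two copies of $Q_{d-1}$ joined by a perfect matching, properly colours one copy with $\{1,2\}$ and the other with $\{3,4\}$, so that the unique matching-neighbour of any vertex carries a colour absent from the rest of its neighbourhood. Your formula $f(v)=2x_1+\sum_{i\ge2}x_i\pmod4$ is a genuinely different coloring (for $d\ge3$ the half $x_1=0$ already uses all four colours, unlike the paper's), but it encodes the same core idea of singling out the $e_1$-direction so that every vertex has exactly one neighbour two steps away in $\mathbb Z_4$. Your version has the virtue of a closed formula that makes both properness and conflict-freeness a one-line check; the paper's version makes the structural reason (a perfect matching into a disjointly coloured half) more transparent.
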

\begin{proof}
In order to show that four colors always suffice, perceive $Q_d$ as comprised of two copies of $Q_{d-1}$, call them the `left' and `right' copy, and a perfect matching between them.
Color properly by $1$ and $2$ the left copy of $Q_{d-1}$ and color by $3$ and $4$
the right copy of $Q_{d-1}$. Obviously, the obtained 4-coloring is conflict-free and proper.

Let us argue by contradiction that four colors are always required. Suppose there is a PCF $3$-coloring of $Q_d$.
Say vertex $u$ is colored by $1$ and its `unique-neighbor color' is $2$. So apart from a certain neighbor $v$ of $u$ which is of color $2$, all
the other $d-1$ neighbors $x_1, x_2,\ldots , x_{d-1}$ of  $u$ are of color $3$. The vertex $v$ has with each $x_i$ another
common neighbor $z_i$ (distinct from $u$) and in view of the properness all these $z_i$'s must be of color $1$. However, then $v$ has all its neighbors colored by $1$, a contradiction.
\end{proof}

For comparison, the following have been shown in~\cite{CarPetSkr22} in regard to the odd chromatic number of cubes.

\begin{equation}
\chi_o(Q_d)=
\begin{cases}
2 & \text{\quad if \,} d \text{ is odd} \,;\\
4 & \text{\quad if \,} d \text{ is even}\,.
\end{cases}
\end{equation}

\subsection{Complexity}

The \textit{corona} of a graph is obtained by attaching a pendant edge to each vertex.

\begin{observation}
    \label{corona}
Let $H$ be the corona of a connected graph $G$. Then $\PCF(H)\in\{\chi(G),\chi(G)+1\}$, and for every bipartite graph $G$ it holds $\PCF(H)=\chi(G)+1$. On the other hand, if $G$ is not bipartite and
$\Delta(G) \le  2\chi(G)-3$ then $\PCF(H) = \chi(G)$.
\end{observation}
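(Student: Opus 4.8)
The plan is to analyze the corona $H$ of a connected graph $G$, where each vertex $v\in V(G)$ has exactly one pendant leaf $\ell_v$ attached. First I would establish the easy lower bound. Since $G$ is a subgraph of $H$ and a PCF coloring restricted to $G$ is in particular proper, any PCF coloring of $H$ uses at least $\chi(G)$ colors; so $\PCF(H)\ge\chi(G)$. The real work is to pin down whether the answer is $\chi(G)$ or $\chi(G)+1$, and the governing intuition is this: each leaf $\ell_v$ has $v$ as its sole neighbor, so conflict-freeness at $\ell_v$ is automatic (a single neighbor trivially has a uniquely-appearing color), and the leaf merely needs a color distinct from $\varphi(v)$. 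The binding constraints are therefore (i) properness and conflict-freeness on the internal vertices $v\in V(G)$, whose neighborhood in $H$ is $N_G(v)\cup\{\ell_v\}$, and (ii) having a free color available for each leaf. The key observation is that each leaf $\ell_v$ can be recruited to \emph{supply} the unique color witnessing conflict-freeness at $v$: if we color $\ell_v$ with some color $c\notin\varphi(N_G[v])$, then $c$ appears exactly once in $N_H(v)$ and conflict-freeness at $v$ is met for free.

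For the upper bound $\PCF(H)\le\chi(G)+1$, I would start from a proper $\chi(G)$-coloring $\varphi$ of $G$, introduce one fresh color $\chi(G)+1$, and color every leaf $\ell_v$ with a color in $\{1,\dots,\chi(G)+1\}$ that differs from $\varphi(v)$ and appears exactly once in $N_H(v)$. Since $|N_G(v)|=\deg_G(v)$, there is a color among the $\chi(G)+1$ available that is missing from $\varphi(N_G(v))$ \emph{and} differs from $\varphi(v)$ whenever $\deg_G(v)+1<\chi(G)+1$, i.e.\ $\deg_G(v)\le\chi(G)-1$; one must handle high-degree vertices separately, but the extra color gives enough slack that a uniquely-appearing color can always be planted on the leaf. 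This yields the upper end of the claimed interval.

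Next I would prove that for bipartite $G$ the value is forced up to $\chi(G)+1$, equivalently that $\chi(G)=2$ colors never suffice. Suppose a PCF $2$-coloring of $H$ existed. Restricted to $G$ it is a proper $2$-coloring with classes $A,B$. Take any $v\in A$; its leaf $\ell_v$ must avoid $\varphi(v)$, so $\ell_v$ gets the other color, and then conflict-freeness at $v$ requires a color appearing exactly once in $N_H(v)=N_G(v)\cup\{\ell_v\}$. Because $G$ is connected and nontrivial, I would argue that the parity/uniqueness demands at adjacent internal vertices cannot be simultaneously satisfied with only two colors — the leaf's color is forced to coincide with the colors already saturating $N_G(v)$, destroying uniqueness. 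Making this rigorous (ideally via a short contradiction argument tracing a path or examining a vertex of maximum degree, analogous to the hypercube argument above) is the crux of the bipartite case. Hence $\PCF(H)=\chi(G)+1$.

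The hardest and most delicate part will be the final claim: if $G$ is \emph{not} bipartite and $\Delta(G)\le 2\chi(G)-3$, then $\PCF(H)=\chi(G)$, i.e.\ the extra color is unnecessary. Here I would take an optimal proper $\chi(G)$-coloring $\varphi$ of $G$ and attempt to color each leaf $\ell_v$ \emph{within the same palette} $\{1,\dots,\chi(G)\}$ so as to create a uniquely-appearing color at $v$. For this to work I must find, for each $v$, a color $c\ne\varphi(v)$ that is either absent from $\varphi(N_G(v))$ or appears exactly once there, and then set $\varphi(\ell_v)=c$ if needed (or leave the natural unique color if one already exists in $N_G(v)$). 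The degree bound $\deg_G(v)\le\Delta(G)\le 2\chi(G)-3$ is exactly what guarantees, via a counting argument, that among the $\chi(G)-1$ colors other than $\varphi(v)$ at least one appears at most once in $N_G(v)$: if every one of those $\chi(G)-1$ colors appeared at least twice, the neighborhood would contain at least $2(\chi(G)-1)=2\chi(G)-2>\Delta(G)$ vertices, a contradiction. The non-bipartiteness (equivalently $\chi(G)\ge 3$) is needed to ensure this slack is nontrivial and that the construction does not collapse as it does for $\chi(G)=2$. The main obstacle will be verifying that these leaf-color choices can be made \emph{consistently and simultaneously} while preserving properness (the leaves are mutually non-adjacent, so properness is immediate, which is a helpful simplification) and, more importantly, confirming that the pigeonhole bound $2\chi(G)-2$ is the correct threshold and is tight enough to force a singly-occurring color at every vertex.
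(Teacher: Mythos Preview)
Your approach is the same as the paper's, and the counting argument for the last part is exactly right. A few places are considerably overcomplicated, though.

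For the upper bound $\PCF(H)\le\chi(G)+1$ you are working much too hard: since the proper $\chi(G)$-coloring of $G$ never uses the fresh colour $\chi(G)+1$, simply assign $\chi(G)+1$ to \emph{every} leaf; then that colour appears exactly once in each $N_H(v)$ and you are done. There is no separate ``high-degree'' case.

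For the bipartite case you anticipate a delicate contradiction along paths, but the argument is one line (and this is what the paper does): if $G$ has an edge then $\chi(G)=2$, and for any non-isolated $v\in V(G)$ properness of a $2$-colouring forces all of $N_H(v)=N_G(v)\cup\{\ell_v\}$ into the single colour opposite to $\varphi(v)$; since $|N_H(v)|\ge 2$, no colour is unique there. (The case $G=K_1$, where $\chi(G)=1$ and $H=K_2$, should be stated separately.)

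Finally, your worry about making the leaf choices ``consistently and simultaneously'' is unnecessary: each $\ell_v$ lies in $N_H(v)$ and in no other open neighbourhood, so the choices are completely independent. One small wording issue: when some colour $c$ already appears exactly once in $N_G(v)$, you must set $\varphi(\ell_v)$ to a colour \emph{different} from both $c$ and $\varphi(v)$ (this is where $\chi(G)\ge 3$ is used), not to $c$ itself; your parenthetical suggests you see this, but the sentence ``then set $\varphi(\ell_v)=c$ if needed'' could be misread.
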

\begin{proof}
Clearly $\PCF(H)\geq\chi(G)$ since the restriction of any PCF coloring of $H$ to $G$ is proper. As for $\PCF(H) \le  \chi(G)  +1$, simply take a proper coloring of $G$ with colors $1,2,\ldots,\chi(G)$ and assign the color $\chi(G)+1$ to each leaf of the corona. Let us show that the latter inequality becomes an equality whenever $G$ is bipartite. If $G$ is empty then $\PCF(H)=2=\chi(G) +1$. Otherwise, $\chi(G)=2$ and for any non-isolated vertex $v$ of $G$ the closed neighborhood $N_H[v]$ uses at least three colors.

Consider a graph $G$ such that $\chi(G)\geq3$ and $\PCF(H) =  \chi(G)  +1$. Take a proper coloring $c$ of $G$ with colors $1,2,\ldots,\chi(G)$. Since $c$ does not extend to a PCF coloring of $H$ with the same color set, there exists a
vertex $v\in V(G)$ whose closed neighborhood $N_G[v]$ contains all $\chi(G)$ colors, and moreover apart from the color $c(v)$ every other color appears at least twice in $N_G(v)$. For otherwise, either some color is missing from $N_G[v]$ in which case simply assign it to the leaf attached to $v$, or there are $\chi(G)-1$ colors in $N_G(v)$ and at least one of those colors appears just once in which case give the leaf of the corona at $v$ another color from the $\chi(G)- 1 \leq 2$ colors appearing in $N_G(v)$. In particular, we conclude that $d_G(v)\geq2(\chi(G)-1)$. Hence if $\Delta(G) \le  2\chi(G)-3$ then $\PCF(H) = \chi(G)$ indeed.
\end{proof}

\noindent\textbf{Remark.} The condition $\Delta(G) \le  2\chi(G)-3$ cannot be replaced by $\Delta(G) \le  2\chi(G)-2$ as soon as $\Delta(G)\geq4$. For example, let $G=K_3\Box K_3$. This is a $4$-regular $3$-colorable graph and under any proper $3$-coloring $c$ of $G$ every vertex $v$ has exactly two neighbors from each color other than $c(v)$. Hence, for its corona $H$ it holds $\CFP(H)=4=\chi(G)+1$.

Now let us discuss the hardness of PCF colorings.
\begin{observation}
Determining the PCF chromatic number is $\mathcal{NP}$-hard.
\end{observation}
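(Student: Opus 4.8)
The plan is to establish $\mathcal{NP}$-hardness by a polynomial-time reduction from the classical $\mathcal{NP}$-complete problem of deciding whether a graph is $3$-colorable. The natural approach is to exploit a gadget that forces the PCF chromatic number to coincide with (or be controlled by) the ordinary chromatic number, so that a PCF-coloring algorithm could be used to decide $3$-colorability. The most convenient tool already available in this excerpt is the complete subdivision $S(G)$: recall that $\PCF(S(G)) \ge \chi(G)$, while $\chi(S(G)) = 2$. However, for a clean reduction I would instead use the corona construction from Observation~\ref{corona}, since it relates $\PCF(H)$ directly to $\chi(G)$ via the two-valued formula $\PCF(H) \in \{\chi(G), \chi(G)+1\}$.

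First I would recall that deciding whether $\chi(G) \le 3$ is $\mathcal{NP}$-complete even when restricted to graphs of bounded maximum degree (e.g.\ $4$-regular graphs, or graphs with $\Delta(G) \le 4$). Given an instance $G$ of $3$-colorability, I would construct in polynomial time the corona $H$ of $G$. By Observation~\ref{corona}, if $G$ is bipartite then $\PCF(H) = 3$; but bipartiteness is decidable in polynomial time, so I may assume $G$ is non-bipartite, i.e.\ $\chi(G) \ge 3$. The key is to arrange matters so that $\PCF(H)$ distinguishes $\chi(G) = 3$ from $\chi(G) \ge 4$. Since $\PCF(H) \in \{\chi(G), \chi(G)+1\}$ and $\chi(G) \ge 3$, one sees that $\PCF(H) \le 4$ forces $\chi(G) \le 4$, and more usefully $\PCF(H) = 3$ would force $\chi(G) = 3$ (as $\chi(G) \le \PCF(H)$). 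Thus a PCF-coloring oracle that certifies $\PCF(H) = 3$ certifies $3$-colorability of $G$.

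The main obstacle is the gap between the lower bound $\chi(G) \le \PCF(H)$ and the precise value: when $\chi(G) = 3$ one must ensure $\PCF(H) = 3$ rather than $4$, and Observation~\ref{corona} guarantees $\PCF(H) = \chi(G)$ only under the degree hypothesis $\Delta(G) \le 2\chi(G) - 3 = 3$. I would therefore restrict the reduction to the sub-problem of $3$-colorability on graphs with $\Delta(G) \le 3$, provided this restricted problem remains $\mathcal{NP}$-complete; indeed $3$-colorability is $\mathcal{NP}$-complete for graphs of maximum degree $4$, and known reductions (or a degree-reduction gadget replacing high-degree vertices by small expander-like components) bring the degree bound down while preserving the chromatic number. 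On such instances, Observation~\ref{corona} yields $\PCF(H) = \chi(G) = 3$ exactly when $G$ is $3$-colorable, completing the reduction.

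An alternative, perhaps cleaner, route avoids the degree restriction entirely by reducing from general $k$-colorability for a fixed $k$ via the subdivision bound, or by directly showing that computing $\PCF$ is at least as hard as computing $\chi$ on suitable gadgeted graphs. In the write-up I would state the reduction, verify its polynomial size, and argue both directions of correctness; I expect the delicate point to be precisely the forward direction, namely confirming that a $3$-coloring of $G$ extends to a proper conflict-free $3$-coloring of the chosen gadget, which is exactly where the degree condition of Observation~\ref{corona} is invoked.
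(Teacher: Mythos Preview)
Your primary route has a fatal flaw. You correctly identify that Observation~\ref{corona} gives $\PCF(H)=\chi(G)$ only when $\Delta(G)\le 2\chi(G)-3$, which for $\chi(G)=3$ means $\Delta(G)\le 3$. But $3$-colorability restricted to graphs with $\Delta(G)\le 3$ is \emph{not} $\mathcal{NP}$-complete: by Brooks' theorem every connected graph with $\Delta\le 3$ other than $K_4$ is $3$-colorable, so the problem is decidable in polynomial time. No ``degree-reduction gadget'' can push the bound below $4$ while keeping the problem hard; the $\Delta=4$ threshold is genuine. Consequently your reduction does not establish hardness, and the alternative routes you sketch (via subdivisions or unspecified gadgets) are not developed enough to fill the gap, since for instance no matching upper bound on $\PCF(S(G))$ in terms of $\chi(G)$ is available for general $G$.

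The paper's proof sidesteps exactly this obstacle. It also forms the corona $H$ and uses the sandwich $\chi(G)\le\PCF(H)\le\chi(G)+1$, but instead of trying to collapse the $\pm 1$ gap, it invokes a hardness-of-approximation result of Khanna, Linial and Safra: already for $3$-colorable graphs it is $\mathcal{NP}$-hard to produce a proper $4$-coloring. Thus a polynomial-time algorithm computing $\PCF(H)$ (and hence a PCF coloring of $H$ with at most $\chi(G)+1$ colors, whose restriction to $V(G)$ is a proper $(\chi(G)+1)$-coloring of $G$) would solve an $\mathcal{NP}$-hard problem. The key idea you are missing is that the additive $+1$ slack in the corona bound is not an obstacle once one appeals to inapproximability rather than to an exact decision problem.
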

\begin{proof}
Let $G$ be a graph of chromatic number at least $3$, and consider its corona $H$. By Observation~\ref{corona}, it holds $\chi(G) \leq \PCF(H) \leq \chi(G) +1$.  So if we can determine the PCF chromatic number in polynomial time,  let alone if we can determine it for Corona graphs, than we can efficiently find
a proper coloring of $G$ that uses at most $\chi(G) +1$ colors (induced by the PCF-coloring of $H$). However, it has been shown by Khanna at al.~\cite{KhaLinSaf00} that already for 3-colorable graphs it is $\mathcal{NP}$-hard to find a proper 4-coloring.
\end{proof}

In~\cite{CarPetSkr22}, we have shown that the odd chromatic number of a graph is bounded by the sum of the total domination number and chromatic number. Here we prove an analogous inequality for the PCF chromatic number.

\begin{observation}
    \label{domination}
 Let $G$ be a connected graph with total domination $\gamma_t(G)$. Then  $$\PCF(G)  \le \gamma_t(G) + \chi(G).$$
\end{observation}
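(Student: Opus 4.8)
The plan is to marry a proper coloring of $G$ with a total dominating set, spending one brand-new, globally unique color on each vertex of the dominating set so that the conflict-free condition becomes automatic. First I would fix a minimum total dominating set $D=\{v_1,\ldots,v_t\}$ with $t=\gamma_t(G)$; this is well defined because a connected graph on at least two vertices has no isolated vertex, which is exactly the requirement for $\gamma_t(G)$ to exist. The defining property I will lean on is that \emph{every} vertex of $G$, including the vertices of $D$, has a neighbor lying in $D$.

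Next I would take a proper coloring $\varphi$ of $G$ using the colors $1,\ldots,\chi(G)$, and modify it into a coloring $\psi$ by recoloring each $v_i\in D$ with a fresh color $c_i$, where $c_1,\ldots,c_t$ are $t$ pairwise distinct colors not among $1,\ldots,\chi(G)$. The coloring $\psi$ then uses exactly $\chi(G)+\gamma_t(G)$ colors, matching the claimed bound. To check that $\psi$ is proper, note that on $V(G)\setminus D$ it coincides with the proper coloring $\varphi$, so no conflict can arise there; and each new color $c_i$ is carried by the single vertex $v_i$, hence cannot appear on two adjacent vertices. Thus $\psi$ is a proper coloring of $G$.

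It remains to verify that $\psi$ is conflict-free with respect to neighborhoods. Let $v$ be any non-isolated vertex of $G$. Since $D$ is a total dominating set, $v$ has a neighbor $v_i\in D$, and the color $c_i$ occurs on no vertex of $G$ other than $v_i$; in particular $c_i$ appears exactly once in the open neighborhood $N(v)$. As this argument applies to every vertex, $\psi$ is the desired PCF coloring and $\PCF(G)\le\chi(G)+\gamma_t(G)$.

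I do not expect a genuine obstacle, but the one point that must be handled with care is that the conflict-free requirement has to hold \emph{also} at the vertices of $D$ themselves; this is precisely where the \emph{total} (as opposed to ordinary) domination hypothesis is essential, since an ordinary dominating set need not dominate its own members and would leave some $v_i$ potentially uncovered. Everything else follows directly from the fact that each $c_i$ is a unique color.
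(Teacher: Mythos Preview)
Your argument is correct and follows essentially the same route as the paper: assign pairwise distinct fresh colors to the vertices of a minimum total dominating set and keep a proper $\chi(G)$-coloring on the remaining vertices, so that every open neighborhood is hit by a uniquely occurring color. Your write-up is more explicit in verifying properness and in highlighting why \emph{total} domination is needed, but the underlying construction is identical.
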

 \begin{proof}
Let $D$ be a total dominating set of $G$ of cardinality $\gamma_t(G)$.
Color each vertex of $D$ with distinct color, and the rest $V\setminus D$ by at most $\chi(G)$ original colors.
Every vertex in $G$ has a  vertex of unique color in $D$ hence this is proper conflict free coloring.
\end{proof}

\noindent\textbf{Remark.} Observe if $G$ has a vertex $v$ of degree  $\deg(v)=|V(G)| - 1$  (i.e., a universal dominating vertex) then $\PCF(G)\le \chi(G) +2$ simply by taking $v$ and a neighbor of $v$ as a total dominating set.

\bigskip
\noindent\textbf{Remark.} The previous remark can be used to give another proof of the $\mathcal{NP}$-hardness of determining the PCF chromatic number. Namely,
let $G$ be an instance for the $k$-coloring problem. Let $H$ be obtained from $G$ by adding a new vertex $v$ adjacent to all vertices of $G$. Clearly
$\chi(H)  =  \chi(G) +1$. So, we have  $\chi(G)  \le  \PCF(H) \le \chi(H) +2 \le \chi(G) +3$.
However the mentioned paper~\cite{KhaLinSaf00} reveals that it is $\mathcal{NP}$-hard to find a $k +\lfloor k/3\rfloor -1$  coloring for $k$-colorable graphs.
Notice that for $k\ge 6$ it holds  $k +3 < k + \lfloor k/3\rfloor  - 1$ and is in the range of $\mathcal{NP}$-hard.

\subsection{Subdivisions}
First we determine the PCF chromatic number of the complete subdivision of a complete graph. Recall that $S(G)$, the complete subdivision of graph $G$, is obtained from $G$ by subdividing every edge in $E(G)$ exactly once. If $G=K_n$ we denote $S(G)$ by $\SK_n$.
\begin{observation} \label{complete subdivision}For $n\ge 3$, it holds $\CFP(\SK_n)=n$.
\end{observation}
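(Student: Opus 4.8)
The plan is to bound $\CFP(\SK_n)$ from both sides, the lower bound being essentially free from the introduction and the upper bound requiring an explicit coloring.

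For the lower bound, recall that $\SK_n$ consists of the $n$ original vertices $1,\dots,n$ of $K_n$ together with a subdivision vertex $v_{ij}$ for each pair $i<j$, where $v_{ij}$ has exactly the two neighbors $i$ and $j$. Since $v_{ij}$ has degree two, the conflict-free condition at $v_{ij}$ forces its two neighbors to receive distinct colors, so any PCF coloring restricts to a proper coloring of $K_n$ on the original vertices. Hence $\CFP(\SK_n)\ge\chi(K_n)=n$; this is exactly the inequality $\PCF(S(G))\ge\chi(G)$ noted in the introduction applied to $G=K_n$.

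For the upper bound I would exhibit a PCF $n$-coloring. Assign color $i$ to the original vertex $i$. At every subdivision vertex the conflict-free requirement is automatic, since such a vertex has only the two distinctly colored neighbors $i,j$, so each of those two colors appears exactly once. The only genuine constraints are therefore properness at the subdivision vertices, where $v_{ij}$ must avoid the colors $i$ and $j$ and so has $n-2$ admissible colors, and the conflict-free condition at the $n$ original vertices, each having as its neighborhood the $n-1$ subdivision vertices incident to it. Securing a uniquely appearing color in each of these size-$(n-1)$ neighborhoods is the heart of the argument.

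I would handle this by flooding with one dominant color and reserving a single distinguished edge at each vertex. Concretely, color $v_{ij}$ by $n$ whenever $i<j\le n-1$, color $v_{in}$ by $1$ for $2\le i\le n-1$, and color $v_{1n}$ by $2$. One then checks properness, where each assigned color avoids the two endpoint colors (using $n\ge3$), together with the conflict-free condition at the original vertices: for $i\le n-1$ the neighborhood of $i$ shows color $n$ exactly $n-2$ times and the color of $v_{in}$ (namely $1$, or $2$ when $i=1$) exactly once, while the neighborhood of $n$ shows color $1$ on $v_{2n},\dots,v_{(n-1)n}$ and color $2$ on $v_{1n}$, so $2$ is unique. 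The main obstacle is precisely this last verification of a unique color at each high-degree vertex, which is why the construction is deliberately asymmetric: the single edge to vertex $n$ carries the unique color for each other vertex, and the specially colored edge $v_{1n}$ carries it for vertex $n$ itself. The tight case $n=3$, where only $n-2=1$ color is admissible at each subdivision vertex, is already covered by the same assignment.
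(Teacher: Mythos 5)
Your proof is correct, and while it follows the same two-step template as the paper (lower bound via $\PCF(S(G))\ge\chi(G)$ applied to $K_n$, upper bound via an explicit $n$-coloring that gives original vertex $i$ the color $i$ and floods most subdivision vertices with a dominant color), your execution of the upper bound is genuinely different and noticeably cleaner. The paper designates a special incident subdivision vertex for each original vertex by fixing a (near-)perfect matching $M$ of $K_n$, colors the subdivision vertices of $M$ specially, and then colors the remaining subdivision vertices greedily, observing that at most $4$ or $5$ colors are forbidden at each; this forces a split into even and odd $n$, works only for $n\ge6$, and requires ad hoc colorings for $n=3,4,5$. You instead use the star at vertex $n$ as the carrier of unique colors: $v_{in}$ (colored $1$, or $2$ for $i=1$) supplies the unique color for each $i\le n-1$, the single deviant vertex $v_{1n}$ supplies it for $n$, and every other subdivision vertex can be given the one color $n$ outright. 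This eliminates the greedy step, the parity case split, and the small-case tables, and I verified it is valid for all $n\ge3$ including the tight case $n=3$ where $\SK_3=C_6$. The paper's matching-based argument buys nothing extra here; your closed-form assignment is the shorter route to the same bound.
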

\begin{proof} Denote by $v_1,\ldots,v_n$ the vertices of $K_n$, and denote by $u_{i,j}$ the vertex that subdivides the edge $v_iv_j$ of the original graph $K_n$. Observe that:
\begin{itemize}
  \item $\SK_3$ is PCF 3-colorable since $\SK_3=C_6$;

  \item $\SK_4$  is PCF  4-colorable  as follows: let $c(v_j)  = j$  for $j = 1,2,3,4$, and let  $c(u_{1,2}) = 4$,
  $c(u_{1,3}) = 2$, $c(u_{1,4})  = 3$, $c(u_ {2,3})  = 1$, $c(u_{2,4})=1$, $c(u_{3,4})  = 2$;

  \item $\SK_5$ is PCF 5-colorable as follows: let $c(v_j) =j$  for $j = 1,2,3,4,5$, and let   $c(u_{1,2})  = 3$, $c(u_{2,3}) = 4$, $c(u_{3,4}) = 5$, $c(u_{4,5}) = 1$, $c(u_{5,1}) = 2$,  $c(u_{1,3}) = 4$, $c(u_{1,4})  = 5$, $c(u_{2,4}) = 5$,
  $c(u_{2,5}) = 1$, $c(u_{3,5}) = 1$.
\end{itemize}
Let $n\ge6$. Color the original vertices of $K _n$ with $n$ colors $1,\ldots, n$, by assigning the color $i$ to $v_i$. We extend this to a PCF $n$-coloring of $\SK_n$ by considering separately the cases of even and odd $n$.

\bigskip
\noindent\textbf{Case 1:} \textit{$n$ is even}. Let $M=\{v_1v_2,v_ 3 v_4,\ldots,v_{n-1}v_n\}$. Color the subdividing vertices of degree 2 within the perfect matching $M$ such that  $u_{1,2}$ is colored by $n$ and every other $u_{2j-1,2j}$ is colored  by $1$.
Now consider an arbitrary 2-vertex $u_{j,k}$  (obtained by subdividing the edge $v_ jv_ k$)  which is not colored yet.
Its two adjacent vertices $v_ j$ and $v_k$   forbid for $u_{j,k} $ the colors $j$, $k$ due to properness. Forbid also the colors used within $M$, i.e., the colors $1$ and $n$. Since at most $4$ colors are forbidden at the vertex $u_{j,k}$, a fifth color can be used. Notice that this furnishes a PCF  $n$-coloring of $\SK_n$. Namely, properness is clear. As for the conflict-free part, the color $n$ is unique in the neighborhoods of $v_1$ and $v_2$, the color $1$ is unique in the neighborhoods of $v_3,\ldots,v_n$, and each $2$-vertex has distinctly colored neighbors.

\bigskip
\noindent\textbf{Case 2:} \textit{$n$ is odd}. This time let $M=\{v_1v_2,v_ 3 v_4,\ldots,v_{n-2}v_{n-1},v_nv_1\}$. Once again we start by coloring the vertices of degree $2$ within the edges of $M$: $u_{1,2}$ is colored by $n$, every other $u_{2j-1,2j}$ is colored  by $1$, and $u_{1,n}$ is colored by $2$. Repeating the argument from above, we see that at most 5 colors are forbidden at a subdivision vertex not within $M$, and a sixth color can be used.
It is readily seen that this furnishes a PCF $n$-coloring of $\SK_n$.
\end{proof}

Next we give a bound on the PCF chromatic number of $S(G)$ whenever $G$ has a $1$-factor.

\begin{observation}
 Let $S(G)$ be the complete subdivision of a graph $G$ that has a perfect matching.
 Then $\PCF(S(G)) \le \max\{5, \chi(G)\}$. If additionally $G$ is bipartite it holds $\PCF(G)\leq4$, and this is sharp.
 \end{observation}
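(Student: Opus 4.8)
The plan is to use the perfect matching $M$ of $G$ to supply, at each original vertex, a neighbour of uniquely occurring colour. First I would record the structural feature that makes $S(G)$ tractable: every subdivision vertex $u_{i,j}$ has degree $2$, and once the original vertices are properly coloured it sees two distinct colours $c(v_i)\ne c(v_j)$, so the conflict-free requirement is met automatically at every subdivision vertex. Consequently the task reduces to guaranteeing a uniquely appearing colour in the neighbourhood of each original vertex $v_i$, whose neighbours are exactly the subdivision vertices on the $G$-edges incident to $v_i$; among these, precisely one --- the vertex on $v_i$'s matching edge --- will be made to carry that witness colour.

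Write $k=\max\{5,\chi(G)\}$ and fix a proper colouring $c\colon V(G)\to\{1,\ldots,\chi(G)\}\subseteq\{1,\ldots,k\}$. For each matching edge $v_iv_{i'}\in M$ I would colour its subdivision vertex by some $\gamma_i\in\{1,\ldots,k\}\setminus\{c(v_i),c(v_{i'})\}$ (possible since $k\ge3$); thus $\gamma_i=\gamma_{i'}$ is the single matching-neighbour colour shared by the matched pair. Then I colour each remaining (non-matching) subdivision vertex $u_{i,j}$ by any colour outside $\{c(v_i),c(v_j),\gamma_i,\gamma_j\}$. The resulting colouring is proper, and the conflict-free condition holds everywhere: automatically at subdivision vertices, and at an original vertex $v_i$ because the colour $\gamma_i$ occurs on its matching neighbour and on no non-matching neighbour (each such neighbour was explicitly denied $\gamma_i$), hence exactly once in $N(v_i)$. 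This proves $\PCF(S(G))\le k$. The one delicate point --- and what I regard as the crux --- is that a non-matching vertex $u_{i,j}$ need only avoid the four colours $c(v_i),c(v_j),\gamma_i,\gamma_j$, the two endpoint colours together with the two \emph{private} witness colours of its ends, rather than any globally reserved palette; this keeps the number of forbidden colours at four, so that the single spare colour guaranteed by $k\ge5$ always suffices. A naive global reservation of the witness colours would instead force a case analysis precisely when $\chi(G)=5$.

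For the bipartite refinement one has $\chi(G)=2$, and the count above gives up to four forbidden colours inside a palette of only four, so the slack must be recovered differently. Since every edge of a bipartite $G$ joins colour $1$ to colour $2$, the colour $3$ is admissible on \emph{all} matching subdivision vertices; taking $\gamma_i\equiv3$ uniformly leaves each non-matching vertex forbidding only $\{1,2,3\}$, so it may take colour $4$. This yields a PCF $4$-colouring and hence $\PCF(S(G))\le4$. Sharpness follows from $G=C_4$, which is bipartite with a perfect matching and satisfies $S(C_4)=C_8$; by the stated cycle values $\PCF(C_8)=4$, so the bound cannot be improved to $3$.
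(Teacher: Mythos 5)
Your proposal is correct and follows essentially the same route as the paper: properly color the original vertices, give each matching subdivision vertex a third color serving as the unique witness for its two endpoints, and observe that each non-matching subdivision vertex has only four forbidden colors (the two endpoint colors plus the two witness colors of its ends), with the bipartite case handled by taking the witness color to be $3$ uniformly and the remaining vertices colored $4$. Your sharpness example $S(C_4)=C_8$ is an instance of the paper's family $C_n$ with $3\nmid n$.
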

 \begin{proof}
Color properly the vertices of $G$ in $S(G)$ by using colors $1,2,\ldots,\chi(G)$. Color next each subdividing vertex $w$ of a matching edge $uv$ by always picking a color distinct from  $c(u)$ and $c(v)$.
Every not yet colored vertex  $z$ is a subdivision vertex of an edge $xy$ not belonging to the matching. Forbid at $z$ the two colors $c(x),c(y)$, and also forbid the colors of the subdividing vertices on the two edges in the matching that are incident to $x$ or $y$.
So at most $4$ colors are forbidden at $z$, and we can use a fifth color. The obtained PCF coloring of $S(G)$ uses at most $\max \{5 ,\chi(G)\}$ colors. Hence
$\PCF(S(G)) \le \max \{5 ,\chi(G)\}$.

Assume now $G$ is bipartite with perfect matching $M$. We construct a PCF $4$-coloring of $S(G)$ as follows. Apply to the vertices of $G$ in $S(G)$ a proper coloring with color set $\{1,2\}$. Assign the color $3$ to each subdividing vertex of the matching $M$ and color by $4$ the rest of the subdividing vertices.
This is clearly a proper conflict free coloring of $S(G)$. The bound of $4$ colors is sharp; for example it is realized by every $C_n$ with $3\nmid n$ since then $S(C_n) = C_{2n}$ has PCF chromatic number $4$.
 \end{proof}


\begin{observation}
    \label{forest}
Suppose $H$ is obtained from $G$ by subdividing once every edge of a spanning forest $F$ without isolated vertices. Then
$\PCF(H) \le \chi(G) +2$. Moreover, if $F$ is a perfect matching then  $\PCF(H) \le \chi(G) +1$.
\end{observation}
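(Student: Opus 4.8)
The plan is to begin from a proper coloring of $G$ and to exploit the fact that the subdivision vertices are automatically conflict-free. First I would fix a proper coloring $c$ of $G$ with the colors $1,\dots,\chi(G)$ and keep it on the original vertices of $H$; since the edges of $G$ outside $F$ are \emph{not} subdivided, this stays a proper coloring of the subgraph of $H$ induced by the original vertices. Each subdivision vertex $w$ of an edge $uv\in F$ satisfies $N_H(w)=\{u,v\}$ with $c(u)\neq c(v)$, so $w$ is conflict-free no matter which color it eventually receives; moreover I would color every subdivision vertex with one of the two fresh colors $\chi(G)+1,\chi(G)+2$, which keeps the whole coloring proper (a subdivision vertex is adjacent only to original vertices, and no two subdivision vertices are adjacent). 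Thus everything reduces to a single task: assign the colors $\chi(G)+1,\chi(G)+2$ to the subdivision vertices---equivalently, $2$-color the edges of $F$---so that every original vertex becomes conflict-free. Because $F$ is spanning with no isolated vertex, every vertex $v$ has at least one subdivision neighbor, and since the two fresh colors occur nowhere else in $N_H(v)$, it suffices to arrange that at each $v$ one of the two fresh colors appears on exactly one of its incident $F$-edges.

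Hence the heart of the matter---and the step I expect to be the main obstacle---is the following purely combinatorial statement about the forest $F$: \emph{its edges admit a $2$-coloring such that at every vertex some color is used on exactly one incident edge.} A naive bottom-up assignment tends to get stuck at high-degree vertices, so the key is to choose the structure carefully. The construction I would use is to root each component of $F$ at a leaf (which exists since every component has at least one edge) and to color each edge by the parity of the depth of its deeper endpoint. Then at any non-root vertex $v$ the parent edge and all child edges receive opposite colors, so the parent edge is the unique edge of its color at $v$ (and if $v$ is a leaf it is incident to its parent edge only); while the chosen root, being a leaf, is incident to a single edge. Consequently every vertex of $F$ has a color appearing exactly once among its incident edges, and combining this with the reduction above yields $\PCF(H)\le\chi(G)+2$.

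For the refinement when $F$ is a perfect matching, a single fresh color suffices. Here every vertex lies on exactly one edge of $F$, so I would color all subdivision vertices with $\chi(G)+1$: each original vertex then has exactly one subdivision neighbor, which is necessarily the unique occurrence of $\chi(G)+1$ in its neighborhood, and the subdivision vertices are conflict-free as before. This gives $\PCF(H)\le\chi(G)+1$, completing the proof.
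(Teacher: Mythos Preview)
Your proposal is correct and follows essentially the same approach as the paper: both arguments keep a proper $\chi(G)$-coloring on the original vertices, root each component of $F$ at a leaf, and then color the subdivision vertices by alternating two fresh colors according to depth, so that every original vertex sees its parent-edge subdivision vertex as the unique occurrence of one of the fresh colors. Your phrasing via a $2$-edge-coloring of $F$ is a clean abstraction of the same construction, and the perfect-matching refinement is identical.
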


\begin{proof}
Consider a spanning forest $F$ (including  the possibility of a spanning tree) without isolated vertices that is subdivided (every edge once).  Root the components $F_1,\ldots,F_t$  at roots $v_1,\ldots,v_t$  which are all leaves of the respective trees. Apply a proper coloring on all original vertices of $G$ in $H$ with $\chi(G)$ colors.

Consider the subdividing vertices of degree 2. We shall color them by two new colors, say $x$ and $y$. In any $F_i$, assign the color $x$ to the unique neighbor of the root, then assign the color $y$ to the next level of subdivision vertices, and so on, alternate between $x, y$ up to the end.

Clearly, all subdivision vertices are colored properly and each has its two neighbors of distinct colors.
Every original vertex  $u$ is properly colored as all its neighbors have colors distinct from the color of $u$.  Each  $u$ appears in some forest component which was subdivided. If it is the root of a component then it has a  unique neighbor colored by $x$;   otherwise, every original vertex has a unique color  $x$ or $y$  determined by its father in the subdivided spanning  component.

If the components of the forest $F$ were isolated edges (namely, if $F$ was a matching) than we used only one extra color, and $\chi(G) +1$ colors suffice.
\end{proof}

\noindent\textbf{Remark.} The obtained bound is sharp: e.g.,  if we subdivide a spanning tree of $K_3$ we get $C_5$ with $\PCF(C_5) = 5$ while $\chi(K_3) = 3$.

\begin{observation}
Suppose H is obtained from G by subdivision of a spanning forest without isolates and possible further edges. Then $\PCF(H) \le \max\{5 , \chi(G) +2\}$.
\end{observation}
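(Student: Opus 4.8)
The plan is to build on Observation~\ref{forest} together with the automatic-conflict-free trick already used for complete subdivisions. Write the set of subdivided edges as $E(F)\cup S$, where $F$ is the spanning forest without isolated vertices and $S$ collects the further subdivided edges. The guiding remark is that a degree-$2$ vertex $w$ subdividing an edge $ab$ is automatically conflict-free, because its two neighbours $a,b$ receive distinct colours under any proper colouring; hence the only conflict-free conditions a newly subdivided edge can spoil are those of the two \emph{original} endpoints. So the whole task reduces to colouring the vertices on the $S$-edges without disturbing the conflict-free status of original vertices.

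First I would reproduce the colouring of Observation~\ref{forest} on the part of $H$ coming from $F$: properly colour the vertices of $G$ with colours $1,\ldots,\chi(G)$, root each component of $F$ at a leaf, and colour the subdivision vertices of $F$ alternately with two new colours $x,y$, level by level. Exactly as in that proof, every original vertex $u$ then has a uniquely appearing colour from $\{x,y\}$ in its neighbourhood, namely the colour of the subdivision vertex on the edge joining $u$ to its father (or the colour $x$ of its single forest-neighbour, if $u$ is a root), since the subdivision vertices on the edges to its children all carry the opposite colour.

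The heart of the matter is then to colour each subdivision vertex $w$ of an edge $ab\in S$ without destroying this. The key point is that if $c(w)\notin\{x,y\}$, then inserting $w$ into the neighbourhood of $a$ (and of $b$) leaves the counts of the colours $x$ and $y$ unchanged, so the uniquely appearing forest-colour of each of $a,b$ survives; and $w$ itself is conflict-free because $c(a)\neq c(b)$. Thus it suffices to choose $c(w)\notin\{x,y\}$ with $c(w)\neq c(a),c(b)$, the latter being precisely what properness requires. When $\chi(G)\ge 3$ such a colour can be drawn from the non-empty set $\{1,\ldots,\chi(G)\}\setminus\{c(a),c(b)\}$, so no new colour is introduced and the colouring uses $\chi(G)+2\le\max\{5,\chi(G)+2\}$ colours. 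The remaining verifications—properness across every edge of $H$, and conflict-freeness at every degree-$2$ vertex—are immediate, since subdivision vertices are pairwise non-adjacent and non-adjacent to the forest subdivision vertices.

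The single obstacle, and the reason the bound takes the form $\max\{5,\chi(G)+2\}$, is the bipartite case $\chi(G)=2$: here $\{1,2\}\setminus\{c(a),c(b)\}=\emptyset$ because $c(a)\neq c(b)$, so no original colour is free for $w$. I would resolve this by introducing one fifth colour $z$ and assigning it to \emph{every} subdivision vertex of an edge in $S$; this is proper (these vertices are mutually non-adjacent and $z\notin\{1,2\}$), and since $z\notin\{x,y\}$ it preserves the uniquely appearing forest-colour at each original vertex. The bipartite case therefore uses the colours $\{1,2,x,y,z\}$, that is exactly $5=\max\{5,\chi(G)+2\}$ colours, while every case with $\chi(G)\ge3$ uses $\chi(G)+2$ colours, which together establish the claimed bound.
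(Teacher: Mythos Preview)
Your proof is correct and follows essentially the same approach as the paper's: both reuse the colouring from Observation~\ref{forest} and then colour each extra subdivision vertex $w$ by avoiding the four colours $c(a),c(b),x,y$. The only difference is presentational---the paper phrases this uniformly as ``four colours are forbidden, so one of the $\max\{5,\chi(G)+2\}$ colours remains free'', whereas you make the case split $\chi(G)\ge 3$ versus $\chi(G)=2$ explicit.
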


\begin{proof}
Suppose  we subdivide a spanning forest $F$ without isolates vertices and further subdivide certain edges in $E(G)\setminus E(F)$.
Color the original vertices of $G$ and the subdivided vertices of $F$ as in the proof of Observation~\ref{forest}.  Uncolored remain the subdivision vertices $w$, not belonging to the subdivided forest. Let an arbitrary such $w$ be adjacent to $u$  and $v$, which are  original vertices of $G$ having distinct colors. Forbid at $w$ the colors of $u, v$  and also forbid the colors $x,y$. Thus $4$ colors are forbidden at $z$, which leaves free for $z$  a color from $\max\{ 5 , \chi(G) +2\}$ colors.
 \end{proof}

 We shall be giving next sufficient conditions for PCF $5$-colorability and $4$-colorability, respectively. But before proceeding with that discussion, for the reader's convenience,  we make a brief digression in order to mention some standard terminology that shall be used. A \textit{$j$-vertex}, \textit{$j^+$-vertex}, or \textit{$j^-$-vertex} is a vertex with degree equal to $j$, at least $j$, or at most $j$, respectively. Given positive integer $\ell$, an \textit{$\ell$-thread} in a graph $G$ is a trail of length $\ell+1$ in $G$ whose $\ell$ internal vertices have degree $2$ in the full graph $G$. Note that under this definition, an $\ell$-thread contains two $(\ell-1)$-threads, and the endpoints of a thread may coincide. So, in general, we distinguish between the notions of a `path-thread' and a `cycle-thread'.

 Recall that a complete subdivision of a graph $G$ is obtained from $G$ by subdividing every edge exactly once. More generally, for $k\ge1$ a \textit{$k$-subdivision} of $G$ is obtained by subdividing every edge at least $k$ times. We already know from Observation~\ref{complete subdivision}, that a $1$-subdivision graph can have arbitrarily large PCF chromatic number. To put it other words, given a graph $G$, let $K(G)=\{v\in V(G):\deg(v)\ge3\}$. If the distance $d(u,v)\ge2$ between all vertices from $K(G)$, then still $\PCF(G)$ can acquire arbitrarily large values. Contrarily, the PCF chromatic number of all $2$-subdivisions of graphs is bounded, in view of the following.

\begin{observation}
    \label{22}
Let $G$ be a graph such that every two $3^+$-vertices are at distance at least $3$. Then $\PCF(G)\le5$.
\end{observation}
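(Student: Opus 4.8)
The plan is to exploit the sparse structure forced by the hypothesis. Write $B=\{v\in V(G):\deg(v)\ge3\}$ for the set of $3^+$-vertices. Since any two members of $B$ are at distance at least $3$, they are pairwise non-adjacent, no two of them share a common neighbour (so the open neighbourhoods $\{N(v):v\in B\}$ are pairwise disjoint), and every neighbour of a $B$-vertex is a $2^-$-vertex. Consequently, deleting $B$ leaves a disjoint union of paths and cycles, and $G$ is recovered from $B$ by joining its vertices through maximal \emph{threads} (maximal runs of $2$-vertices), each thread joining two $B$-vertices, or a $B$-vertex to a leaf, or a $B$-vertex to itself. First I would dispose of the components of $G$ that contain no $3^+$-vertex: each such component is a path or a cycle, hence PCF $5$-colourable by Observation~\ref{tree} and by~\eqref{cycles}. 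From now on the attention is on a component meeting $B$.

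The key reduction is that the conflict-free requirement becomes local and mild: at a $2$-vertex it merely says that its two neighbours receive distinct colours (equivalently, along every thread any three consecutive vertices get three distinct colours), at a $1$-vertex it is automatic, and only at a $B$-vertex $v$ does it demand a colour appearing exactly once in $N(v)$. My colouring uses the palette $\{1,2,3,4,5\}$. Colour every vertex of $B$ with $1$; this is proper since $B$ is independent. I reserve the colour $5$ as a \emph{marker}. Each thread I would then colour so that consecutive triples are rainbow, its two end-vertices (in $B$, or a leaf) being already coloured $1$ (or to be coloured); the point is that with five colours such a colouring of a thread exists for any prescribed pair of near-endpoint colours, the only restriction being that on the shortest threads the two near-endpoint colours must differ. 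This is routine: the square of a path has chromatic number $3$, so two spare colours leave ample freedom at the ends.

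To satisfy the conflict-free condition at the vertices of $B$, I want to colour, for each $v\in B$, exactly one neighbour of $v$ with the marker $5$ and all of its remaining (at least two) neighbours with colours from $\{2,3,4\}$. Because the neighbourhoods are disjoint, the only way this can clash globally is if a single thread is asked to carry the marker at \emph{both} of its $B$-endpoints, which on a short thread is impossible. \textbf{This consistency is the main obstacle}, and I would remove it with Hall's theorem. Form the bipartite graph between $B$ and the set of threads, joining $v$ to each thread incident with it, and seek a matching saturating $B$; a matched thread then carries the marker only at its matched endpoint. Hall's condition holds because every $v\in B$ is incident with at least $\lceil\deg(v)/2\rceil\ge2$ threads while every thread is incident with at most two members of $B$, so double counting gives at least $|S|$ threads meeting any $S\subseteq B$.

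Fixing such a matching $v\mapsto\tau_v$, I would colour the near-$v$ end of $\tau_v$ with $5$ and every other near-$v$ end with a colour in $\{2,3,4\}$ (choosing distinct values on short threads when both ends are non-marked), and then fill each thread's interior keeping consecutive triples rainbow. Finally I would verify the three conditions: properness is immediate; every $2$-vertex and $1$-vertex is conflict-free by the rainbow-triple property; and at each $v\in B$ the marker $5$ occurs exactly once among its $\ge3$ neighbours, since only the near-$v$ end of $\tau_v$ is coloured $5$. This yields the desired PCF $5$-colouring, proving $\PCF(G)\le5$.
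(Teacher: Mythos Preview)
Your argument is essentially correct, but it follows a very different path from the paper's. The paper's proof is a five-line minimal-counterexample reduction: since no two $3^+$-vertices are within distance $2$, any neighbour of a $3^+$-vertex is a $2^-$-vertex with a further $2^-$-neighbour, so $G$ contains an adjacent pair $v,w$ of $2^-$-vertices; deleting them, PCF $5$-colouring the remainder by minimality, and extending (each of $v,w$ has at most three forbidden colours, leaving two choices) finishes it. No structural decomposition, no matching.

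Your approach, by contrast, is fully constructive: decompose $G-B$ into threads, use Hall's theorem to select one marker thread per $B$-vertex, then colour each thread as a rainbow-triple path with prescribed near-end colours. This is sound, and the Hall argument is the same one the paper later uses (Claim~1 in the proof of Theorem~\ref{66}). What you gain is an explicit colouring rather than an existence proof by contradiction; what you pay is a substantially longer argument with several details left implicit --- the extendability of a rainbow-triple colouring on a path with both near-ends prescribed, and the coordination of the $\{2,3,4\}$-choices on short threads, are asserted rather than verified. Both are routine (the constraint graph on unmarked near-ends has maximum degree~$1$; the greedy fill of a thread interior forbids at most four colours at each step), but they should be spelled out. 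The paper's method also generalises cleanly to the $(a,b)$-degeneracy bound of Theorem~\ref{abdegenericity}, whereas your marker-colour trick is tied to the specific distance-$3$ hypothesis.
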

\begin{proof}
Consider a minimal counter-example $G$.  It is a connected graph of order at least $6$. Take a pair of adjacent $2^-$-vertices, say $v$ and $w$. By the minimality choice of $G$, the graph $G'=G-\{v,w\}$ admits a PCF $5$-coloring $c$. We extend $c$ to $G$ by assigning to $v$ and $w$ distinct colors that are `available' for them in the sense that properness and conflict-freeness of the coloring are preserved. Noting that $v$ and $w$ can have at most one neighbor each in $V(G')$, we forbid at them the colors of those neighbors, denote them $v'$ and $w'$. Moreover, at $v$ we forbid a possible third color which is unique in having single appearance in $N_{G'}(v')$. Thus there are at least two colors that are available for $v$. Similarly, at least two colors are available for $w$. We pick distinct colors that are available for $v$ and $w$, respectively. This gives a PCF $5$-coloring of $G$ as any possible isolated vertex $u$ of $G'$ has $N_G(u)\subseteq \{v,w\}$.
\end{proof}

\noindent\textbf{Remark.} A generalization of Observation~\ref{22} is given by Theorem~\ref{abdegenericity} in the upcoming section.

\bigskip

One naturally wonders if by increasing the minimum distance between $3^+$-vertices, a color can be saved. We ought to be careful here because of the following negative example. Let $G$ consist of two copies of $C_5$ having one common vertex. Then $\PCF(G)=5$, and at the same time (since $|K(G)|=1$) the distance between any two $3^+$-vertices is arbitrarily large. Note in passing that the girth $g(G)=5$. On the other hand, by putting constraint on the girth as well, we have the following positive result for $5$-subdivisions.

\begin{theorem}
    \label{66}
Let $G$ be a graph of girth $g\ge6$ and distance $d(u,v)\ge6$ for every two vertices $u,v\in K(G)$. Then $\PCF(G)\le4$.
\end{theorem}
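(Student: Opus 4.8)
The plan is to give an explicit $4$-coloring, exploiting the fact that the two hypotheses force a very rigid global structure. Since any two vertices of $K(G)$ are at distance at least $6$, no neighbor of a \emph{branch vertex} (a member of $K(G)$) can itself be a branch vertex, so all neighbors of a branch vertex are $2^-$-vertices and any two branch vertices are non-adjacent. Hence $G$ is obtained from its pairwise far-apart branch vertices by joining them with \emph{threads} (maximal paths of $2$-vertices), together with pendant threads ending in leaves and components containing no branch vertex at all. Every thread whose two endpoints are branch vertices has length at least $\min_{u,v\in K(G)} d(u,v)\ge6$, every thread closing up into a loop at a single branch vertex has length at least $g\ge6$ by the girth hypothesis, and a branch-vertex--free component is a path or a cycle $C_n$ with $n\ge g\ge6$.

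First I would color every branch vertex with color $1$, which is proper since branch vertices are pairwise non-adjacent. Along each thread I then color the internal $2$-vertices so that every three consecutive vertices of the thread receive distinct colors. Since for a $2$-vertex $w$ with neighbors $a,b$ conflict-freeness is simply $c(a)\ne c(b)$, this single \emph{three-consecutive-distinct} condition simultaneously guarantees properness and conflict-freeness at every $2$-vertex. It then remains to secure conflict-freeness at each branch vertex $v$, which I do by arranging that among the first vertices of the threads leaving $v$ exactly one receives color $2$ while all the others receive color $3$; as $c(v)=1\notin N(v)$, this makes color $2$ appear exactly once in $N(v)$. This prescribes the colors of $t_0=v$ and of the first internal vertex $t_1$ on each thread, and symmetrically of the last two vertices when the opposite endpoint is also a branch vertex; the two ends impose independent constraints because along a thread of length at least $6$ they are at distance at least $4$.

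The crux is therefore an interpolation statement: for every $k\ge6$ and every admissible prescription of colors on $t_0,t_1$ and on $t_{k-1},t_k$, the thread $t_0t_1\cdots t_k$ admits a three-consecutive-distinct $4$-coloring. This is where I expect the real work to lie, since a greedy sweep can fail two steps short of the far boundary. I would settle the base lengths $k\in\{6,7,8\}$ by direct inspection---length $6$ being the genuinely tight case, solvable because the two free choices for $t_2$ already produce several possible pairs $\{t_2,t_3\}$, at most one of which can clash with the right boundary, so a valid color for the critical middle vertex always survives. For $k\ge9$ I induct downward: take the coloring guaranteed for length $k-3$, locate any interior run $x,y,z$ of three (necessarily distinct) consecutive colors, and duplicate it to $x,y,z,x,y,z$. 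Every affected window then remains a permutation of $\{x,y,z\}$ while the prescribed boundary colors are left untouched, yielding a valid coloring of length $k$; the availability of a fourth color is exactly what reconciles the two arbitrary boundary pairs in the tight base cases.

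The leftover components are easy. A pendant thread has an unconstrained leaf end, so once its branch-vertex end is fixed it is colored greedily in the three-consecutive-distinct fashion. A branch-vertex--free path is a tree, hence has PCF chromatic number at most $3$ by Observation~\ref{tree}, and a branch-vertex--free cycle is some $C_n$ with $n\ge6$, hence $\PCF(C_n)\le4$ by~\eqref{cycles}. Assembling all these colorings produces a proper conflict-free $4$-coloring of $G$, establishing $\PCF(G)\le4$.
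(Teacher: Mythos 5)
Your proof is correct and follows essentially the same route as the paper's: decompose $G$ into branch vertices joined by threads, and reduce everything to an interpolation lemma that colors a long path with prescribed colors on its first two and last two vertices, established by small base cases together with an induction extending the length by $3$. The only real difference is organizational: the paper colors each branch vertex with a fourth color and invokes Hall's theorem to assign each branch vertex a dedicated thread whose near end carries the uniquely-appearing color, whereas you sidestep the matching entirely by letting each branch vertex independently designate one of its own incident thread-ends to receive color $2$ --- a small simplification that your version of the interpolation lemma absorbs.
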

\begin{proof}
Consider a minimal counter-example $G$. It is a connected graph of minimum degree $\delta(G)=2$. Indeed, for otherwise, there is a $1$-vertex $u$, and by the minimality choice of $G$, the graph $G-u$ admits a PCF $4$-coloring. But any such coloring readily extends to a PCF $4$-coloring of $G$ by forbidding two colors at $u$. Also note that $K(G)\neq\emptyset$, for otherwise $G$ is a cycle $\neq C_5$, and thus it is PCF $4$-colorable (see equation~\eqref{cycles}). We construct a bipartite graph $H[A,B]$ whose $A$-side is comprised of all vertices in $K(G)$, whereas the side $B$ has a vertex for any maximal path-thread in $G$ that has an endpoint in $K(G)$. (Such a path may have exactly one endpoint in  $K(G)$ if it is contained in a cycle-thread having only one vertex in $K(G)$.) Connect by an edge in $H$ every vertex of the $B$-side to each of its endpoints.

\bigskip
\noindent\textbf{Claim 1.} \textit{There exists a matching in $H$ that saturates $A$.} Any vertex $v$ of $A$ has degree at least $2$; i.e., $v$ is a $2^+$-vertex of $H$. Contrarily, any vertex $P$ of $B$ has degree either $1$ or $2$; hence $P$ is a $2^-$-vertex of $H$. In order to apply Hall Theorem~\cite{Hal35} on matchings in bipartite graphs, we need to verify Hall's condition for the $A$-side of $H$. Letting $A^*\subseteq A$, denote by $N(A^*)$ the set of vertices having a neighbor in $A^*$. By double-counting the edge set $[A^*,N(A^*)]$ we obtain the following inequalities regarding its size: $2|A^*|\le e(A^*,N(A^*))\le 2|N(A^*)|$. Consequently, $|A^*|\le|N(A^*)|$, and we are done.

\bigskip
So we may assign to each vertex $v$ of $K(G)$ a maximal path-thread $P(v)$ with endpoint $v$ such that $v'\neq v''$ implies $P(v')\cap P(v'')=\emptyset$. Note that any such $P(v)$ is a path on at least six vertices.
   We shall use the following auxiliary coloring result.

\bigskip
\noindent\textbf{Claim 2.}\textit{If $P_n:v_1v_2\cdots v_{n-1}v_n$ is a path on $n\ge5$ vertices, then it admits a PCF $4$-coloring $c$ such that $c(v_1),c(v_2),c(v_{n-1}),c(v_n)\in\{1,2,3\}$ and $c(v_1),c(v_n)$ are preassigned.} Upon permutation of the colors $1,2,3$, we may assume that $c(v_1)=1$ and $c(v_n)\in\{1,2\}$. Consider first the case of $5\le n\le7$. If $c(v_n)=1$ then take $(c(v_1),c(v_2),\ldots, c(v_{n-1}),c(v_n))$ be equal to: $(1,2,4,3,1)$ (for $n=5$), $(1,2,4,3,2,1)$ (for $n=6$), or $(1,2,3,4,3,2,1)$ (for $n=7$). Contrarily, if $c(v_n)=2$ then set $(c(v_1),c(v_2),\ldots, c(v_{n-1}),c(v_n))$ be equal to: $(1,2,4,3,2)$ (for $n=5$), $(1,2,4,3,1,2)$ (for $n=6$), or $(1,2,3,4,1,3,2)$ (for $n=7$). The case $n\ge8$ is settled inductively. Namely, let $(c(v_1),c(v_5),\ldots,c(v_n))$ be the already constructed coloring of $P_{n-3}$ and set $(c(v_2),c(v_3),c(v_4))=(3,4,1)$.

\bigskip

We are ready to construct the desired PCF $4$-coloring of $G$. Start by coloring every $P(v), v\in K(G)$ as follows. With enumeration $P(v):vv_1v_2\cdots v_{n-1}v_n$, color $v$ by $4$ and color $P(v)-v$ in accordance with Claim~2 such that $v_1$ receives the color $1$ and $v_n$ receives the color $2$. Any other non-colored $2$-vertex $w$ lies in a maximal path-thread $Q$ consisting entirely of $2$-vertices. Observing that $Q$ is of order at least $5$, color it in accordance with Claim~2 so that its endpoints are assigned with the color $2$. This completes a proper $4$-coloring of $G$. Notice that it is conflict-free as every $2$-vertex has differently colored neighbors, and in the neighborhood of every $3^+$-vertex the color $1$ has single appearance.
\end{proof}

Note in passing that one cannot hope in general for PCF $3$-colorability no matter how far apart are $3^+$-vertices, how large the girth is, and how large the maximum degree is. For simply take $t$ copies of some $C_{3k+2}$, all sharing a common vertex $v$ of degree $2t$. To conclude that the obtained graph is not PCF $3$-colorable, we argue by contradiction. By dropping the common vertex $v$, one gets a collection of $t$ disjoint copies of $P_{3k+1}$. Since we are not using a fourth color, the first and the last vertex on each path are colored the same. So in $N(v)$ every color occurs an even number of times, a contradiction.

\section{Claw-free graphs and chordal graphs}

Graph $G$ is said to satisfy the {\em $1/2$-neighborhood condition}, if for every vertex $v$ in $V(G)$, it holds $\chi([N(v)]) \ge \lfloor \deg(v)/2 \rfloor + 1$.

 \begin{proposition}
If $G$ satisfies the $1/2$-neighborhood condition then $\PCF(G) = \chi(G)$.
 \end{proposition}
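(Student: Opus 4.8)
The plan is to establish the nontrivial inequality $\PCF(G) \le \chi(G)$ by showing that \emph{any} optimal proper coloring of $G$ is automatically conflict-free with respect to neighborhoods; the reverse inequality $\chi(G) \le \PCF(G)$ is immediate from the definitions. First I would fix a proper coloring $\varphi$ of $G$ using exactly $\chi(G)$ colors, take an arbitrary non-isolated vertex $v$, and set $d = \deg(v)$. The goal at $v$ is to exhibit a color that occurs exactly once in the open neighborhood $N(v)$.

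The key observation is that the restriction of $\varphi$ to the induced subgraph $[N(v)]$ is a proper coloring of that subgraph, so the number of \emph{distinct} colors appearing on $N(v)$ is at least $\chi([N(v)])$. Invoking the $1/2$-neighborhood condition, this number is therefore at least $\lfloor \deg(v)/2 \rfloor + 1$.

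From here I would argue by contradiction with a one-step counting estimate. Suppose no color occurs exactly once on $N(v)$; then every color present on $N(v)$ occurs there at least twice. Letting $k$ be the number of distinct colors on $N(v)$, this gives $d \ge 2k$. Combined with $k \ge \lfloor d/2 \rfloor + 1$ we obtain $d \ge 2\lfloor d/2 \rfloor + 2$, which fails for both parities of $d$ (yielding $d \ge d+2$ when $d$ is even and $d \ge d+1$ when $d$ is odd). This contradiction shows that some color has a unique appearance in $N(v)$, so $\varphi$ is conflict-free at $v$; since $v$ was arbitrary, $\varphi$ is a PCF coloring using only $\chi(G)$ colors.

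I do not expect a genuine obstacle here, as the whole argument reduces to a pigeonhole count once the relevant quantities are identified. The only point warranting a little care is the comparison between $\chi([N(v)])$ and the number of colors the global coloring $\varphi$ actually uses on $N(v)$: one must note that a proper coloring of the entire graph restricts to a proper coloring of every induced subgraph, and hence cannot use fewer than $\chi([N(v)])$ colors on $N(v)$. With that settled, the even/odd case split is entirely routine.
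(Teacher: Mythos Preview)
Your proposal is correct and follows essentially the same approach as the paper: take an optimal proper coloring, note that it restricts to a proper coloring of each $[N(v)]$ and hence uses at least $\lfloor \deg(v)/2\rfloor+1$ colors on $N(v)$, and then apply pigeonhole to conclude some color appears exactly once. The paper's proof is simply a terser version of the same counting argument, without the explicit parity case split.
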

 \begin{proof}
 Consider a proper coloring of $G$ with $\chi(G)$ colors. Let $v$ be an arbitrary vertex from $V(G)$. Since
 $\chi([N(v)])  \ge \lfloor \deg(v)/2 \rfloor +1$, it follows  that $N(v)$ contains at least $\lfloor \deg(v)/2 \rfloor +1$ colors. Hence it is impossible that every color appearing in $N(v)$ appears there at least twice. In other words, there is a color that appears exactly once, which makes the coloring conflict-free.
 \end{proof}

\begin{proposition}
 Every claw-free  graph $G$ with odd degrees has $\PCF(G) = \chi(G)$.
\end{proposition}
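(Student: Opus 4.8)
The plan is to reduce this statement to the previous proposition, which asserts that $\PCF(G)=\chi(G)$ whenever $G$ satisfies the $1/2$-neighborhood condition (namely, $\chi([N(v)])\ge\lfloor\deg(v)/2\rfloor+1$ for every vertex $v$). So the whole task becomes verifying that a claw-free graph in which every degree is odd automatically satisfies this neighborhood condition. I would therefore fix an arbitrary vertex $v$ of odd degree $\deg(v)=d$ and try to bound from below the chromatic number of the subgraph induced on its open neighborhood $N(v)$.

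First I would recall the standard structural fact underlying claw-freeness: since $G$ has no induced $K_{1,3}$, the neighborhood $[N(v)]$ of any vertex $v$ contains no independent set of size $3$; that is, $\alpha([N(v)])\le 2$. The complement of $[N(v)]$ is then triangle-free, but more directly I want a lower bound on $\chi([N(v)])$, so I would use the elementary inequality $\chi(H)\ge |V(H)|/\alpha(H)$ valid for every graph $H$ (each color class is independent, so one needs at least $|V(H)|/\alpha(H)$ classes). Applying this with $H=[N(v)]$, which has $|V(H)|=\deg(v)=d$ vertices and $\alpha(H)\le 2$, gives $\chi([N(v)])\ge d/2$.

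The key step is now to exploit the parity hypothesis to upgrade $d/2$ to the required $\lfloor d/2\rfloor+1$. Since $d$ is odd, $d/2$ is not an integer, so $\chi([N(v)])\ge d/2$ together with integrality forces $\chi([N(v)])\ge\lceil d/2\rceil=(d+1)/2=\lfloor d/2\rfloor+1$. This is exactly the $1/2$-neighborhood condition, and it holds at every vertex precisely because every degree is odd. I would then invoke the preceding proposition to conclude $\PCF(G)=\chi(G)$.

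The main obstacle, such as it is, lies in being careful with the parity bookkeeping: the bound $\chi([N(v)])\ge d/2$ is by itself insufficient (for even $d$ it only yields $\lfloor d/2\rfloor$, which is why the oddness assumption is essential), and the entire saving comes from the fact that an integer that is at least the non-integer $d/2$ must be at least $(d+1)/2$. I would make sure to state the independence bound $\alpha([N(v)])\le2$ carefully from the definition of claw-free (no induced $K_{1,3}$ centered at $v$), and to handle degenerate cases such as isolated or low-degree vertices smoothly—an isolated vertex imposes no conflict-free constraint, and for $\deg(v)=1$ the bound reads $\chi([N(v)])\ge1=\lfloor1/2\rfloor+1$ trivially. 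Everything else is a direct citation of the previous proposition.
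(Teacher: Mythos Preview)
Your proof is correct and follows essentially the same route as the paper: both arguments verify the $1/2$-neighborhood condition by observing that claw-freeness forces $\chi([N(v)])\ge\lceil\deg(v)/2\rceil$ (you make this explicit via $\alpha([N(v)])\le2$ and $\chi\ge n/\alpha$, while the paper simply asserts it), and then use the oddness of $\deg(v)$ to rewrite the ceiling as $\lfloor\deg(v)/2\rfloor+1$ before invoking the preceding proposition.
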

 \begin{proof}
Observe that if $v$ is a vertex in a claw-free graph $G$ then $\chi([N(v)]) \ge \lceil \deg(v)/2\rceil$. Thus, if $v$ is of odd degree $k$ then $\chi([N(v)]) \ge \lfloor \deg(v)/2 \rfloor +1$.  In particular, if all degrees in $G$ are odd than $G$ satisfies the $1/2$-neighborhood condition and $\PCF(G) = \chi(G)$.
 \end{proof}

Let us add to the discussion regarding the class of claw-free  graphs by establishing an upper bound on the proper conflict-free chromatic number in terms of the maximum vertex degree.

\begin{proposition}
    \label{K13}
Let $G$ be a claw-free graph of maximum degree $\Delta$. Then $$\PCF(G) \le  2\Delta +1.$$
\end{proposition}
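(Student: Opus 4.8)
The plan is to exploit the defining property of claw-free graphs twice over. First I would record the structural observation already implicit in the preceding propositions: since $G$ is claw-free, every neighborhood satisfies $\alpha(G[N(v)])\le 2$, because three pairwise non-adjacent neighbors of $v$ would induce a claw. Consequently, in \emph{any} proper coloring each color class meets $N(v)$ in an independent set, hence in at most two vertices, so every color appears at most twice in each neighborhood. This immediately shows that a proper coloring fails to be conflict-free only at an even-degree vertex all of whose neighborhood colors appear exactly twice; call these the \emph{bad} vertices. In particular every odd-degree vertex is automatically fine, recovering the earlier proposition.

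The engine of the proof is a two-layer coloring. I would use a first palette of base colors to properly color $G$, and reserve a disjoint second palette of fresh colors with which to recolor a suitable dominating independent set $R$. The crucial point, again from claw-freeness, is that any independent set meets a neighborhood in at most two vertices, so $|R\cap N(x)|\le 2$ for every $x$. Hence if the vertices of $R$ are recolored so that any two of them sharing a common neighbor receive distinct fresh colors, then for every vertex $x\notin R$ the (one or two) fresh colors appearing in $N(x)$ each appear exactly once. Taking $R$ to be a maximal independent set guarantees that every such $x$ has a neighbor in $R$, so every vertex outside $R$ acquires a uniquely colored fresh witness and becomes conflict-free, regardless of whether it was bad; properness is free because $R$ is independent and the two palettes are disjoint.

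It then remains to bound the number of colors and to deal with the vertices lying inside $R$. A vertex of $R$ sees only base colors, so it is conflict-free precisely when it was not bad; thus the only residual difficulty is a bad vertex contained in $R$, which I would repair by a local adjustment, recoloring one of its base-colored neighbors with a fresh color to break a doubled pair, and checking via the $\alpha\le 2$ bound that this neither violates properness nor destroys the fresh witness of an already-satisfied neighbor. For the count, constraining the fresh recoloring amounts to properly coloring the auxiliary graph on $R$ in which two vertices are joined when they share a neighbor; by the inequality $|R\cap N(x)|\le 2$ this auxiliary graph has maximum degree at most $\Delta$, so $\Delta+1$ fresh colors suffice. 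To fit the base palette into the remaining $\Delta$ colors I would invoke Brooks' theorem ($\chi(G)\le\Delta$) and treat the two exceptional cases directly: a complete graph $K_{\Delta+1}$ has $\PCF=\Delta+1$, and odd cycles have $\Delta=2$ with $\CFP(C_n)\le 5=2\Delta+1$ by~\eqref{cycles}. Summing the two palettes yields $\PCF(G)\le 2\Delta+1$.

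The main obstacle I anticipate is precisely the coordination inside the independent set: guaranteeing that the bad vertices sitting in $R$ can be repaired without cascading damage, and squeezing the total into exactly $2\Delta+1$ rather than $2\Delta+2$, which is what forces the Brooks refinement and a careful, rather than naive, choice of $R$.
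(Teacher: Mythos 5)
Your construction is genuinely different from the paper's proof, which runs a minimal-counterexample argument: delete an adjacent pair $u,v$, take a PCF $(2\Delta+1)$-coloring of $G-\{u,v\}$, and extend it by counting forbidden colors, using claw-freeness only to show that if no color is a singleton in $N(v)\setminus\{u\}$ then every color of $N(v)\setminus\{u\}$ already appears in $N(u)\setminus\{v\}$. Most of your two-layer scheme is sound: the bound $|R\cap N(x)|\le 2$, the fact that the auxiliary ``common neighbour'' graph on $R$ has maximum degree at most $\Delta$ and hence is properly $(\Delta+1)$-colorable greedily, the conclusion that every vertex outside $R$ acquires a uniquely colored fresh witness, and the Brooks bookkeeping with its two exceptional cases are all correct.

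The genuine gap is the repair of a bad vertex $x\in R$. When you recolor a neighbour $y$ of $x$ with a fresh color, that color must (a) differ from the fresh colors of the at most two $R$-neighbours of $y$ \emph{and} of any other repaired vertex adjacent to $y$, for properness; and (b) for every $z\in N(y)\setminus R$ whose only $R$-neighbour is a single vertex $r$, differ from the fresh color of $r$ --- otherwise that color ceases to be a singleton in $N(z)$, and $z$ need not have any base-color witness to fall back on. If $\deg(y)=\Delta$ and $x$ is the only $R$-neighbour of $y$, constraints (a) and (b) already total up to $1+(\Delta-1)=\Delta$; a second $R$-neighbour of $y$, or one adjacent repaired vertex, pushes the count to $\Delta+1$ or more and exhausts the fresh palette. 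On top of this you must guard the witness of a second $R$-neighbour $r'$ of $y$ (whose unique base singleton in $N(r')$ may have been precisely $y$'s old color) and ensure that two repaired vertices lying in a common neighbourhood do not receive the same fresh color. None of these interactions is controlled by the bound $\alpha(G[N(v)])\le 2$ alone, and the proposal offers no ordering or choice of $y$ that avoids them; as written, the repair step can run out of colors, so the proof is incomplete at exactly the point you flagged as the main obstacle.
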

\begin{proof}
Consider a minimal counter-example $G$. Then $\Delta\ge 3$. Let $u,v$ be a pair of adjacent vertices in $G$, and let $G'=G-\{u,v\}$ be of maximum vertex degree $\Delta'$. Since $G'$ is claw-free and $\Delta'\le\Delta$, it admits a PCF coloring $c$ with color set $\{1,2,\ldots,2\Delta+1\}$, by the minimality choice of $G$. Our objective is to extend $c$ to a PCF coloring of $G$ with the same color set by assigning to $u$ and $v$ distinct colors that are `available' in the sense that properness and conflict-freeness are preserved. Note that this takes care of any possible isolated vertices of $G'$, which are of degree $1$ or $2$ in $G$, as their neighborhoods are subsets of $\{u,v\}$. We shall be looking at the neighborhoods $N'(u)=N_G(u)\backslash\{v\}$ and $N'(v)=N_{G}(v)\backslash\{u\}$, and distinguish between three cases in regard to the existence of uniquely appearing colors.

\bigskip
\noindent
{\bf Case 1:} {\em In each of $N'(u)$ and $N'(v)$ there is a color (not necessarily the same) appearing exactly once.} Forbid at $u$ all colors in $c(N'(u))$ and a color with unique appearance in $N'(v)$. Additionally, for each $w\in N'(u)$ that has a unique color with single appearance in $N_{G'}(w)$, forbid that color at $u$. Therefore, in total we are forbidding at most $2\Delta-1$ colors, which leaves at least two colors that are available for $u$. By symmetry, there are also at least two available colors for $v$. We pick two distinct colors that are available for $u$ and $v$, respectively, and we are done.

\bigskip
\noindent
{\bf Case 2:} {\em In either $N'(u)$ or $N'(v)$, but not both, there is a  color that appears exactly once.} By symmetry, assume this happens in $N'(u)$. Thus, for any color $\alpha\in c(N'(v))$ there are $x,y\in N'(v)$ such that $c(x)=c(y)=\alpha$. The properness of $c$ implies that $x$ and $y$ are non-adjacent. Therefore, since the edge set of $G[\{u,v,x,y\}]$ already contains the edges $uv,vx,vy$, the fact that $G$ is claw-free assures $u$ is adjacent to at least one of the vertices $x,y$. Consequently, $\alpha\in c(N'(u))$. From the arbitrariness of $\alpha$ it follows that $c(N'(v))\subseteq c(N'(u))$. Let us extend $c$ to $G$.  Forbid at $u$ all colors in $c(N'(u))\cup c(N'(v))=c(N'(u))$. Similarly to before, for each $w\in N'(u)$ that has a unique color with single appearance in $N_{G'}(w)$, forbid that color at $u$. So, we are forbidding at most $2\Delta-2$ colors, which leaves at least three colors that are available for $u$. Turning to $v$, forbid all colors in $c(N'(v))$ and a color with unique appearance in $N'(u)$. Also, for each $w\in N'(v)$ that has a unique color with single appearance in $N_{G'}(w)$, forbid that color at $v$. Therefore, we are forbidding at most $(3\Delta-1)/2$ colors, which leaves at least $(\Delta+3)/2$ colors available for $v$; hence at least three colors are available for $v$. Assign to $u$ and $v$ distinct available colors, which completes a PCF coloring of $G$.

\bigskip
\noindent
{\bf Case 3:} {\em In neither $N'(u)$ nor $N'(v)$ a certain color appears exactly once.} Consequently, both $c(N'(u))$ and $c(N'(v))$ are of size at most $\lfloor (\Delta-1)/2\rfloor$. Moreover, by the reasoning applied in the previous case, we deduce that $c(N'(u))=c(N'(v))$. Forbid at $u$ all colors in $c(N'(u))=c(N'(v))$. Additionally, for each $w\in N'(u)$ that has a unique color that appears exactly once in $N_{G'}(w)$, forbid that color at $u$. Therefore, we are forbidding in total at most $(3\Delta-3)/2$ colors, which leaves at least $(\Delta+5)/2$ colors that are available for $u$; hence at least four colors are available for $u$. Analogously, there are also at least four colors which are available for $v$. By picking two distinct colors that are available for $u$ and $v$, respectively, we extend $c$ to a PCF coloring of $G$.

\bigskip
Since all possible cases have been covered, this completes our proof.
\end{proof}

\noindent\textbf{Remark.} Every claw-free graph $G$ has $\chi(G)\geq\Delta/2$. Consequently, in view of Proposition~\ref{K13}, for every claw-free graph $G$ it holds that $\PCF(G) \le  4\chi(G) +1$. This gives us an example of a large family of graphs $G$ where the ratio $\PCF(G)/\chi(G)$ is bounded by a constant.

\bigskip

In the next section, we shall apply the proof method of Proposition~\ref{K13} in order to obtain several general upper bounds for the PCF chromatic number. But first let us establish the same upper bound od $2\Delta+1$ colors for the PCF chromatic number of chordal graphs.

\begin{proposition}
Let $G$  be a chordal graph of maximum degree $\Delta$. Then $$\PCF(G) \le  2\Delta +1.$$
\end{proposition}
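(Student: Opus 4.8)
The plan is to run the same minimal-counterexample induction used in the proof of Proposition~\ref{K13}, but to exploit the structure of chordal graphs so as to make the extension step far simpler and avoid any case analysis. Concretely, let $G$ be a counterexample of smallest order, so that $G$ is chordal, has maximum degree $\Delta$, and $\PCF(G) > 2\Delta + 1$. Since induced subgraphs of chordal graphs are again chordal and have maximum degree at most $\Delta$, every graph obtained from $G$ by deleting vertices fails to be a counterexample, and hence admits a PCF $(2\Delta+1)$-coloring; this is the inductive resource I would use.

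The key idea is to delete a \emph{simplicial} vertex rather than an arbitrary pair of adjacent vertices. By the classical fact that every chordal graph possesses a simplicial vertex, I would fix a vertex $v$ whose neighborhood $N(v)$ induces a clique, and write $d = \deg(v) \le \Delta$. Setting $G' = G - v$, let $c$ be a PCF $(2\Delta+1)$-coloring of $G'$, which exists by minimality. I would then extend $c$ to $v$ by choosing an \emph{available} color, meaning one whose assignment to $v$ preserves both properness and conflict-freeness. The first dividend of simpliciality is that conflict-freeness \emph{at $v$ itself} is free of charge: under any proper coloring the vertices of the clique $N(v)$ receive pairwise distinct colors, so every color occurring in $N(v)$ occurs there exactly once, regardless of $c(v)$.

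The remaining and genuinely delicate point is to keep each neighbor $w_i \in N(v)$ conflict-free, since passing from $G'$ to $G$ enlarges $N(w_i)$ by the single vertex $v$. Here the argument I have in mind is the following. Because $c$ is conflict-free on $G'$, every $w_i$ that is non-isolated in $G'$ has some color $\alpha_i$ occurring exactly once in $N_{G'}(w_i)$; provided $c(v) \ne \alpha_i$, that color still occurs exactly once in $N_G(w_i)$, so $w_i$ remains conflict-free. (Any $w_i$ isolated in $G'$ is a leaf of $G$ attached only to $v$, hence conflict-free automatically, and imposes only a properness constraint.) Thus it suffices to forbid, for each of the $d$ neighbors, the color $c(w_i)$ for properness and the color $\alpha_i$ for conflict-freeness, a total of at most $2d \le 2\Delta$ forbidden colors. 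As the palette has $2\Delta + 1$ colors, at least one remains available for $v$, and assigning it yields a PCF $(2\Delta+1)$-coloring of $G$, contradicting the choice of $G$. I expect the only real care to reside in this last step: confirming that protecting a single well-chosen uniquely-appearing color $\alpha_i$ per neighbor is enough, and cleanly dispatching the degenerate low-degree cases (including $d=0,1$) where $w_i$ has no neighbor other than $v$.
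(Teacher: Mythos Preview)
Your argument is correct and is essentially identical to the paper's own proof: both delete a simplicial vertex $v$, invoke minimality to PCF-color $G-v$ with $2\Delta+1$ colors, observe that the clique $N(v)$ receives distinct colors so conflict-freeness at $v$ is automatic, and then forbid at $v$ the at most $2\deg(v)\le 2\Delta$ colors coming from each neighbor's color plus each neighbor's uniquely-occurring color. Your write-up is slightly more explicit about the degenerate cases (neighbors isolated in $G'$), but there is no substantive difference in approach.
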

\begin{proof}
We shall make use of the fact that every chordal graph $G$ has a perfect elimination order such that whenever we remove a simplicial vertex $v$ according to the perfect elimination order  (with $N[v]$  forming a clique of order $\deg(v) +1$)  $G-v$ is another chordal graph.

Suppose $G$ is a minimal counter-example, thus $\PCF(G)  \geq 2\Delta +2$.  Let $v$ be the simplicial vertex that is first in a perfect elimination order. Then $N(v)$ forms a clique of cardinality $\deg(v) \leq \Delta$.
Delete $v$ to obtain $H =  G - v$  which is chordal of maximum degree $\Delta'\leq \Delta$. Then by minimality choice of $G$, it holds that $\PCF(H)  \leq 2\Delta' +1 \leq 2\Delta +1$.  Observe that all vertices of $N(v)$ receive distinct colors as they from a clique.

Consider now $v$.  Since all vertices in $N(v)$ have distinct colors, $v$ has $\deg(v)$  forbidden colors dictated by $N(v)$ and at most $\deg(v)$ forbidden colors dictated by the unique color for each  member of $N(v)$; altogether at most $2\deg(v) \leq 2\Delta$  colors are forbidden.
Consequently, there is an available color for $v$ (among the possible $2\Delta +1$  colors), and thus $G$ has a proper conflict-free coloring with at most $2\Delta +1$ colors - a contradiction.
\end{proof}

\noindent\textbf{Remark.} The graph $G$ obtained from $C_5$ by adding two diagonals from the same vertex is chordal, and $\Delta(G) = 4  , \chi(G) = 3 ,  \PCF(G) = 4$.  So the equality $\chi(G) = \PCF(G)$ need not hold for chordal graphs.

\section{Sparse graphs}

 Here we turn to general upper bounds for the PCF chromatic number.  Given integers $a,b$ with $2\le a\le b$, a graph $G$ is said to be \textit{$(a,b)$-degenerate} if every induced subgraph $H$ (including $G$) has the following property: there exists a $1^-$-vertex $u$ in $H$ (i.e. $\deg_{H}(u)\le1$) or an $(a^-,b^-)$ edge $vw$, i.e., such that $\deg_H(v)\le a$ and $\deg_H(w)\le b$. Notice that, if present, $(a,b)$-degeneracy is a hereditary property by definition. 


\begin{theorem}
    \label{abdegenericity}
For every $(a,b)$-degenerate graph $G$, it holds that
\begin{equation}
     \label{ab}
                         \CFP(G) \le  \max\{5,\lfloor a/2\rfloor+2b -1\}.
    \end{equation}
\end{theorem}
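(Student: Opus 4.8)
The plan is to argue by induction through an order-minimal counter-example, in the same spirit as the proofs of Observation~\ref{22} and Proposition~\ref{K13}; indeed, for $a=b=2$ the bound reads $\max\{5,\,1+4-1\}=5$, so the statement should be a genuine generalization of Observation~\ref{22}. Set $k=\max\{5,\lfloor a/2\rfloor+2b-1\}$ and suppose $G$ is a smallest $(a,b)$-degenerate graph with $\CFP(G)>k$. Since $(a,b)$-degeneracy is hereditary, every proper induced subgraph of $G$ is PCF $k$-colorable. Applying the defining property to $G$ itself yields either a $1^-$-vertex $u$ or an $(a^-,b^-)$-edge $vw$. The first alternative is disposed of at once: a PCF $k$-coloring of $G-u$ extends to $u$ by forbidding the color of its at most one neighbour $u'$ together with a color appearing exactly once in $N_{G-u}(u')$, i.e.\ at most two colors, which is harmless since $k\ge 5$; moreover $u$ needs no witness of its own, having degree at most $1$. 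Hence we may assume $G$ carries an $(a^-,b^-)$-edge $vw$.

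First I would delete both endpoints, setting $G'=G-\{v,w\}$, and fix a PCF $k$-coloring $c$ of $G'$ supplied by minimality. Writing $N'(v)=N_G(v)\setminus\{w\}$ and $N'(w)=N_G(w)\setminus\{v\}$, we have $|N'(v)|\le a-1$ and $|N'(w)|\le b-1$. Any vertex that became isolated in $G'$ has all its $G$-neighbours inside $\{v,w\}$, so it is automatically conflict-free once $v$ and $w$ receive distinct colors; thus I only need to control the already-coloured neighbours in $N'(v)\cup N'(w)$ together with the two new vertices. The goal is to assign $v,w$ distinct colors so that properness holds, every old neighbour keeps a conflict-free witness, and both $v$ and $w$ acquire one.

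The extension rests on three ingredients. For \emph{preservation at old neighbours}: adding a single new neighbour to $x\in N'(v)\cup N'(w)$ can destroy $x$'s conflict-freeness only when $N_{G'}(x)$ possesses exactly one color of multiplicity one, so I call such an $x$ critical, forbid its unique witness color at the relevant endpoint, and observe that a vertex with two distinct singly-appearing colors stays safe automatically. For \emph{properness}, I forbid $c(N'(v))$ at $v$ and $c(N'(w))$ at $w$. For the \emph{witnesses of $v$ and $w$}, I mirror the three cases of Proposition~\ref{K13}, splitting according to whether a color already appears exactly once in $N'(v)$ and in $N'(w)$: when it does, that color serves $v$ (resp.\ $w$) provided the partner does not duplicate it; when it does not, I instead require the partner's color to be fresh in that neighbourhood, which is cheap precisely because then every color of $N'(v)$ has multiplicity at least two, whence $|c(N'(v))|\le\lfloor (a-1)/2\rfloor$, and symmetrically for $w$. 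It is exactly this parity observation that compresses the contribution of the low-degree end to roughly $\lfloor a/2\rfloor$ colors, while the high-degree end contributes the $2b-1$ term.

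The hard part will be the coupled bookkeeping in the witness step: one must verify, uniformly across all cases, that the colors excluded for $v$ (properness, preservation of its critical neighbours, distinctness from $w$, and possibly serving as $w$'s witness) never exceed $k-1$, and \emph{simultaneously} that $v$ and $w$ admit \emph{distinct} colors meeting all of these interlocking requirements. The delicate point is that a color chosen for $w$ to witness $v$ must at the same time avoid breaking $w$'s own witness and the witnesses of the critical neighbours, so the two choices are not independent; this is where the interplay between the parity bound $|c(N'(\cdot))|\le\lfloor|N'(\cdot)|/2\rfloor$ and the cap of $k$ colors becomes tight, and where the constant $5$ in $\max\{5,\lfloor a/2\rfloor+2b-1\}$ is needed to absorb the smallest instances so that at least two admissible colors survive for each of $v$ and $w$. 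Pushing this case analysis through to the end, exactly as the analogous count was completed for claw-free graphs in Proposition~\ref{K13}, produces the required contradiction and establishes~\eqref{ab}.
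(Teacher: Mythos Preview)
Your outline is exactly the paper's strategy: delete an $(a^-,b^-)$-edge $vw$, take a PCF $k$-coloring of $G'=G-\{v,w\}$ by minimality, and extend by case-splitting on whether $N'(v)$ and $N'(w)$ already contain a singly-occurring color, exploiting the parity observation $|c(N'(\cdot))|\le\lfloor|N'(\cdot)|/2\rfloor$ in the ``no unique color'' branches. So at the level of ideas there is no divergence.

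What is missing is the proof itself. Your final paragraph promises that ``pushing this case analysis through to the end \ldots\ produces the required contradiction'', but that bookkeeping \emph{is} the content of the theorem; nothing you have written certifies that the forbidden-color counts actually stay below $k$. When you carry it out you will find two things that your sketch glosses over. First, three cases in the style of Proposition~\ref{K13} do not suffice: in the claw-free setting the mixed case is symmetric because claw-freeness forces $c(N'(v))\subseteq c(N'(u))$, but here no such containment holds and the bound $\lfloor a/2\rfloor+2b-1$ is asymmetric in $a$ and $b$, so ``unique in $N'(v)$ only'' and ``unique in $N'(w)$ only'' require genuinely different counts (the paper uses four cases and also fixes $\deg(v)\le\deg(w)$ beforehand). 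Second, the case analysis is cleanest only once one knows $\lfloor a/2\rfloor+2b-1\ge 6$; the paper secures this by first showing that a minimal counter-example has no adjacent $2$-vertices (a five-color argument identical to Observation~\ref{22}), whence the $(a,b)$-degeneracy cannot be witnessed by a $(2,2)$-edge and effectively $a+b\ge5$. Your sentence that ``the constant $5$ \ldots\ is needed to absorb the smallest instances'' gestures at this, but you have not isolated the mechanism.
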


\begin{proof}
Arguing by contradiction, let $(a,b)$ be a pair of minimum sum $a+b$ for which there exist $(a,b)$-degenerate counter-examples. Among those graphs, consider a minimal counter-example $G$. Thus $G$ is connected and isolate-free. Letting $\ell=\max\{5,\lfloor a/2\rfloor+2b -1\}$, the graph $G$ is not PCF $\ell$-colorable, but any (other) induced subgraph of $G$ admits a PCF $\ell$-coloring.

\bigskip

\noindent\textbf{Claim 1.} \textit{The minimum degree $\delta(G)\ge2$.} If $G$ has a $1$-vertex $u$, take a PCF $\ell$-coloring of $G-u$ and extend to $G$ by forbidding the following (at most two) colors at $u$: the color of its only neighbor and a possible second color that is unique in having a single occurrence in the second neighborhood of $u$. The obtained contradiction settles the claim.

\bigskip

\noindent\textbf{Claim 2.} \textit{There are no adjacent $2$-vertices in $G$.} Suppose  $v$ and $w$ are adjacent $2$-vertices in $G$. Denote $G'=G-\{v,w\}$ and let $v',w'\in V(G')$ be the respective other neighbor of $v$ and $w$ (it is not excluded that $v'=w'$). Since $\delta(G)=2$ and $G\neq C_3$, there are no isolated vertices in $G'$. We extend a PCF $\ell$-coloring of $G'$ to $G$ as follows. Forbid at the vertices $v$ and $w$ the colors of $v',w'$. Moreover, at $v$ (resp. $w$) we forbid a possible third color which is unique in having single appearance in $N_{G'}(v')$ (resp. $N_{G'}(w')$). Since $\ell\ge5$, at least two colors remain available for $v$, and similarly at least two colors are available for $w$. Assign $v$ and $w$ with distinct colors that are available for them, respectively. This gives a PCF $\ell$-coloring of $G$, a contradiction.

\bigskip

\noindent\textbf{Claim 3.} \textit{$a+b\ge5$.} For otherwise, $a=b=2$. However, then $G$ is $(2,2)$-degenerate and of minimum degree $2$, which by definition of $(a,b)$-degeneracy yields a pair of adjacent $2$-vertices, contradicting Claim~2.

\bigskip

 It is implied by Claim~3 that $\ell=\lfloor a/2\rfloor + 2b- 1\ge6$. The $(a,b)$-degeneracy of $G$ provides us with an edge $vw$ such that $\deg(v)\le a$ and $\deg(w)\le b$. We may assume $\deg(v)\le\deg(w)$. Notice that if $b=3$ then $\deg(w)=b$, by Claim~2. Also observe that, as $G\neq C_3$, there are no isolated vertices in $G'=G-\{v,w\}$. By the minimality choice of $G$, there is a PCF coloring $c$ of $G'$ with color set $\{1,2,\ldots, \lfloor a/2\rfloor + 2b- 1\}$. We shall extend $c$ to a PCF coloring of $G$ with the same color set by assigning to $v$ and $w$ distinct available colors. Looking at the neighborhoods $N'(v)=N_G(v)\backslash\{w\}$ and $N'(w)=N_{G}(w)\backslash\{v\}$, we distinguish between four cases in regard to the existence of uniquely appearing colors.

\bigskip
\noindent
{\bf Case 1:} {\em In each of $N'(v)$ and $N'(w)$ there is a color (not necessarily the same) appearing exactly once.} Forbid at $v$ all colors in $c(N'(v))$. Moreover, unless $b=3$, forbid also a color with single appearance in $N'(w)$. Additionally, for each $z\in N'(v)$ that has a unique color with single appearance in $N_{G'}(z)$, forbid that color at $v$. Therefore, in total we are forbidding at most $2a-1$ colors at $v$ if $b>3$, and at most $2a-2$ colors if $b=3$. Thus, still available for $v$ are at least $2(b-a)+\lfloor a/2\rfloor$ colors if $b>3$, and at least $2(b-a)+\lfloor a/2\rfloor+1$ colors if $b=3$. Hence there always are at least two available colors for $v$. Turning to $w$, we forbid at most $2b-1$ colors: namely, all colors in $c(N'(w))$, a color with single appearance in $N'(v)$, and for each $z\in N'(w)$ that has a unique color with single appearance in $N_{G'}(z)$, forbid that color at $w$. This leaves at least $\lfloor a/2\rfloor\ge2$ available colors for $w$. Thus we are able to pick two distinct colors that are available for $v$ and $w$, respectively.

\bigskip
\noindent
{\bf Case 2:} {\em A color has unique appearance  in $N'(v)$, but no such color exists for $N'(w)$.} Hence $|c(N'(w))|\leq\lfloor (b-1)/2\rfloor$. Forbid at $v$ all colors in $c(N'(v))\cup c(N'(w))$. Similarly to before, for each $z\in N'(v)$ that has a unique color with single appearance in $N_{G'}(z)$, forbid that color at $v$. So, we are forbidding at most $2(a-1)+(b-1)/2=2(b-1)+(a-1)/2 - 3(b-a)/2\le 2b+\lfloor a/2\rfloor-2 -3(b-a)/2$ colors; hence at least $3(b-a)/2+1\ge1$ colors are available for $v$. Turning to $w$, forbid all colors in $c(N'(w))$ and a color with single appearance in $N'(v)$. Also, for each $z\in N'(w)$ that has a unique color that occurs exactly once in $N_{G'}(z)$, forbid that color at $w$. Therefore, we are forbidding at most $(b-1)/2+1+(b-1)=b+(b-1)/2$ colors; hence at least $(b-1)/2+\lfloor a/2\rfloor\ge2$ colors are available for $w$. Assign to $v$ and $w$ distinct available colors, and we are done.

\bigskip
\noindent
{\bf Case 3:} {\em A color has unique appearance  in $N'(w)$, but no such color exists for $N'(v)$.} Hence $|c(N'(v))|\leq\lfloor (a-1)/2\rfloor$. Forbid at $v$ all colors in $c(N'(v))$ and a color with unique appearance in $N'(w)$. Similarly to before, for each $z\in N'(v)$ that has a unique color with single appearance in $N_{G'}(z)$, forbid that color at $v$. So, in total we are forbidding at most $(a-1)/2+1+(a-1)\le(\lfloor a/2\rfloor +2b -1)-(2b-a-1)$ colors, which leaves at least $2b-a-1\ge2$ available colors for $v$. Turning to $w$, forbid all colors in $c(N'(v))\cup c(N'(w))$. Moreover, for each $z\in N'(w)$ that has a unique color with exactly one appearance in $N_{G'}(z)$, forbid that color at $w$. Therefore, we are forbidding at most $(a-1)/2+(b-1)+(b-1)\le\lfloor a/2\rfloor +2b -2$ colors; hence there is an available color for $w$. Assign to $v$ and $w$ distinct available colors from $S$, and thus complete the desired PCF coloring of $G$ with color set $\{1,2,\ldots,\lfloor a/2\rfloor +2b - 1\}$.

\bigskip
\noindent
{\bf Case 4:} {\em In neither $N'(v)$ nor $N'(w)$ a certain color appears exactly once.} Consequently,  $|c(N'(v))|\le\lfloor (a-1)/2\rfloor$ and $|c(N'(w))|\le \lfloor (b-1)/2\rfloor$. Forbid at $v$ all colors in $c(N'(v))\cup c(N'(w))$. Additionally, for each $z\in N'(v)$ that has a unique color which appears exactly once in $N_{G'}(z)$, forbid that color at $v$. Therefore, in total we are forbidding at most $\lfloor a/2\rfloor +\lfloor (b-1)/2\rfloor + (a-1)$ colors, which leaves at least $(b-a) + (b+1)/2$ colors that are available for $v$; hence at least two colors are available for $v$. Analogously, we forbid at most $\lfloor a/2\rfloor +\lfloor (b-1)/2\rfloor + (b-1)$ colors at $w$, which leaves at least $(b+1)/2$ colors that are available for $w$; hence at least two colors are available for $w$.  By picking two distinct colors that are available for $v$ and $w$, respectively, we extend $c$ to a PCF coloring of $G$ with the same color set.

\bigskip
Since all possible cases have been covered, this completes our proof.
\end{proof}

There are plenty of $(2,2)$-degenerate graphs that have PCF chromatic number $5$. One such graph family is defined as follows. Let $\mathcal{F}=\{G: G$ is a non-trivial connected graph such that every block of $G$ is isomorphic to $C_5\}$.

\begin{proposition}
    \label{blocks}
If $G\in\mathcal{F}$ then $\PCF(G)=5$.
\end{proposition}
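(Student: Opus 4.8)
The plan is to prove the two bounds $\PCF(G)\le5$ and $\PCF(G)\ge5$ separately, the latter being the substantive half. For the upper bound I would first check that every $G\in\mathcal{F}$ is $(2,2)$-degenerate and then quote Theorem~\ref{abdegenericity} with $a=b=2$, which gives $\PCF(G)\le\max\{5,\lfloor2/2\rfloor+2\cdot2-1\}=5$. The degeneracy is easy to read off the block structure: every vertex of $G$ is either a non-cut vertex of degree $2$ or a cut vertex of degree at least $4$. If some induced subgraph $H$ had minimum degree at least $2$ and no $(2^-,2^-)$-edge, then in any leaf block $u\,v_2v_3v_4v_5$ (with $u$ the unique cut vertex) each interior vertex would either create a $(2,2)$-edge with a block-neighbour or drop to degree at most $1$; hence $H$ must omit all four interior vertices of every leaf block. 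Iterating this peeling leaves nothing, so no such $H$ exists and $G$ is $(2,2)$-degenerate.

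For the lower bound I would rule out every proper conflict-free $4$-colouring $c$ by a counting argument on the block--cut tree of $G$. Call a vertex $x$ of a block $B\cong C_5$ \emph{bad in $B$} if the two neighbours of $x$ along the cycle $B$ receive the same colour under $c$. The first observation is that every block contains at least one bad vertex: a proper colouring of $C_5$ with at most four colours must repeat a colour, two equally coloured vertices of $C_5$ lie at distance $2$, and the vertex between them is then bad. The second observation is that a bad vertex must be a cut vertex of $G$, for if $x$ were bad in $B$ and non-cut, then $\deg_G(x)=2$ with both neighbours sharing a colour, so no colour would occur exactly once in $N(x)$, contradicting conflict-freeness.

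The decisive step bounds how often a single cut vertex can be bad. If $v$ lies in blocks $B^1,\dots,B^{t_v}$ and were bad in all of them, then $N(v)$ would split into $t_v$ monochromatic pairs, every colour would occur an even number of times in $N(v)$, and none exactly once---again violating conflict-freeness; hence $v$ is bad in at most $t_v-1$ of its blocks. Writing $\beta$ for the number of blocks and $\kappa$ for the number of cut vertices, and counting (block, bad-vertex) incidences in two ways, I would obtain $\beta\le\sum_{B}r_B=\sum_{v}b(v)\le\sum_{v}(t_v-1)$, where $r_B\ge1$ is the number of bad vertices of $B$ and $b(v)\le t_v-1$ is the number of blocks in which $v$ is bad. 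The block--cut tree identity $\sum_v t_v=\beta+\kappa-1$ (its number of edges) rewrites the right-hand side as $\beta-1$, yielding $\beta\le\beta-1$, a contradiction. Thus no $4$-colouring exists and $\PCF(G)=5$.

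The main obstacle is locating the right quantity to double-count. The per-block and per-cut-vertex bounds are each routine once stated, but the whole argument hinges on their interplay: conflict-freeness forbids a cut vertex from being bad in all of its blocks, while the pigeonhole principle forces at least one bad vertex per block, and only the block--cut tree identity makes these two facts collide. Getting the bookkeeping of incidences to line up with the tree identity is where the care is needed; everything else is direct verification.
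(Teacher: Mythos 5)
Your proof is correct, and while the upper bound coincides with the paper's (both invoke $(2,2)$-degeneracy and Theorem~\ref{abdegenericity}), your lower bound takes a genuinely different route. The paper argues by minimal counterexample on the number of blocks: in an endblock $v_1v_2v_3v_4v_5$ rooted at the cut vertex $v_1$, any PCF $4$-coloring is forced (up to renaming colors) to give $v_1,\dots,v_4$ the four distinct colors and then $c(v_5)=c(v_2)$, so the color of $v_2$ is duplicated in $N(v_1)$ and the restriction to $G-\{v_2,v_3,v_4,v_5\}$ is again a PCF $4$-coloring of a smaller member of $\mathcal{F}$, a contradiction. You instead run a single global count: every block must contain a ``bad'' vertex (pigeonhole on $5$ vertices and $4$ colors in a proper coloring of $C_5$), conflict-freeness forces bad vertices to be cut vertices and forbids a cut vertex from being bad in all of its blocks, and the block--cut tree identity $\sum_v t_v=\beta+\kappa-1$ turns these facts into $\beta\le\beta-1$. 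I checked the three local claims and the bookkeeping, including the cut-vertex-free case $\kappa=0$ (i.e.\ $G=C_5$, where the right-hand side is $0<1$), and they are sound. Your version buys a cleaner conceptual picture --- it names the exact local obstruction and needs no induction hypothesis about smaller members of $\mathcal{F}$ --- at the cost of a slightly longer setup; the paper's induction is terser but leans on the forced structure of a $4$-coloring of an endblock. One minor remark: your verification of $(2,2)$-degeneracy by ``peeling'' leaf blocks is informal (it is really an induction on the number of blocks, and interior vertices of non-leaf blocks are only eliminated after the blocks hanging off them have been peeled), but the paper asserts this step with no proof at all, so this is not a gap relative to the source.
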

\begin{proof}
Consider a counter-example $G$ with minimum number of blocks $b(G)$. As $G$ is clearly $(2,2)$-degenerate, Theorem~\ref{abdegenericity} implies that $\PCF(G)\le5$. Hence $G$ admits a PCF $4$-coloring. Let $B=$ be an endblock of $G$, and let $v_1,v_2,v_3,v_4,v_5$ be the vertices of the $5$-cycle $B$ met on a circular traversing that starts at the unique cut vertex $v_1$ contained within $B$. Consider a PCF coloring $c$ of $G$ with color set $\{1,2,3,4\}$. Upon permutation of colors, we may assume that $c(v_i)=i$ for $1\le i\le4$. Hence, it must be that $c(v_5)=2$. Thus, the color $2$ does not have single appearance in $N_G(v_1)$. It follows that the restriction of $c$ to $G'=G-\{v_2,v_3,v_4,v_5\}$ is a PCF $4$-coloring. However, $G'\in\mathcal{F}$ and $b(G')=b(G)-1$, contradicting the minimality choice of $G$.
\end{proof}

Note in passing that every connected graph $G$ of maximum degree $\Delta\ge2$ and degenericity $k\ge2$ is $(k,\Delta)$-degenerate. Thus, by Theorem~\ref{abdegenericity}, $\CFP(G) \le  2\Delta +\lfloor k/2\rfloor$, and the inequality is strict as soon as $\Delta\ge3$. The same bound clearly holds if $\Delta=1$ or $k=1$. The following result characterizes the case of equality $\CFP(G) =  2\Delta +\lfloor k/2\rfloor$.

\begin{corollary}
Let $G$ be a connected graph of maximum degree $\Delta\ge1$ and degenericity $k$. Then
   \begin{equation} \label{2D}
                         \CFP(G) \le  2\Delta +\lfloor k/2\rfloor,
    \end{equation}
    with equality if and only if $G=K_2$ or $G=C_5$.
\end{corollary}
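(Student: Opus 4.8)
The plan is to regard the inequality \eqref{2D} as essentially settled by the discussion preceding the statement, and to devote the real work to the equality characterization. For the bound itself I would argue as follows. If $k\ge2$, then $\Delta\ge k\ge2$ and $G$ is $(k,\Delta)$-degenerate, so Theorem~\ref{abdegenericity} yields $\PCF(G)\le\max\{5,\lfloor k/2\rfloor+2\Delta-1\}$; since $\lfloor k/2\rfloor+2\Delta-1<2\Delta+\lfloor k/2\rfloor$ and $5\le2\Delta+\lfloor k/2\rfloor$ throughout this range, \eqref{2D} follows. If instead $k=1$, then $G$ is a tree, so $\PCF(G)\le3$ by Observation~\ref{tree}: when $\Delta=1$ this means $G=K_2$ with $\PCF(K_2)=2=2\Delta+\lfloor k/2\rfloor$, and when $\Delta\ge2$ we get $\PCF(G)\le3\le2\Delta=2\Delta+\lfloor k/2\rfloor$.

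Next I would record that $K_2$ and $C_5$ do attain equality. For $K_2$ we have $\Delta=1$, $k=1$, so $2\Delta+\lfloor k/2\rfloor=2=\PCF(K_2)$. For $C_5$ we have $\Delta=2$, $k=2$, so $2\Delta+\lfloor k/2\rfloor=5$, and $\PCF(C_5)=5$ by \eqref{cycles}. Thus both are genuine equality cases, establishing the ``if'' direction.

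The core of the proof is the converse: ruling out every other connected graph, which I would organize according to the value of $\Delta$. For $\Delta\ge3$ the inequality is strict, by two subcases: if $k\ge2$ then the sharper estimate $\PCF(G)\le\lfloor k/2\rfloor+2\Delta-1<2\Delta+\lfloor k/2\rfloor$ from Theorem~\ref{abdegenericity} applies (here $\lfloor k/2\rfloor+2\Delta-1\ge6>5$, so the maximum is the second term), while if $k=1$ then $G$ is a tree with $\PCF(G)\le3<2\Delta$. For $\Delta=2$ the graph $G$ is a path or a cycle: a path has $k=1$, so the bound is $2\Delta=4$ while $\PCF(G)\le3$, giving strict inequality; a cycle has $k=2$, so the bound is $5$, and by \eqref{cycles} we have $\PCF(C_n)=5$ exactly when $n=5$, so among cycles only $C_5$ is tight. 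For $\Delta=1$ the only connected graph is $K_2$, already identified. This exhausts all cases and pins the equality down to $K_2$ and $C_5$.

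The step I would check most carefully, and the only genuinely delicate point, is the bookkeeping of the degenericity in the borderline regime $\Delta=2$: one must keep in mind that a path has degenericity $1$ whereas a cycle has degenericity $2$, so that the right-hand side of \eqref{2D} evaluates to $4$ and $5$ respectively and only the cycle branch can be tight. Everything else reduces to a routine comparison of the explicit values furnished by Observation~\ref{tree}, the cycle formula \eqref{cycles}, and the estimate of Theorem~\ref{abdegenericity}.
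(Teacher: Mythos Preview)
Your argument is correct and uses precisely the same ingredients as the paper: Theorem~\ref{abdegenericity} for the $(k,\Delta)$-degenerate case, Observation~\ref{tree} for trees, and the cycle formula~\eqref{cycles}. The only cosmetic difference is that the paper organizes the equality analysis by the value of $k$ (first $k=1$, forcing $G$ to be a tree and hence $K_2$; then $k\ge2$, where Theorem~\ref{abdegenericity} already gives the strict bound $2\Delta+\lfloor k/2\rfloor-1$ unless $k=\Delta=2$, reducing to cycles), whereas you organize it by the value of $\Delta$; the content is the same.
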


\begin{proof}  As already noticed, the inequality~\eqref{2D} follows immediately from the inequality~\eqref{ab}. Assume $G$ is a graph for which~\eqref{2D} turns into an equality.  If $k=1$ then $G$ is a non-trivial tree, hence (by Observation~\ref{trees}) $\CFP(G)\le3$. Consequently, $2\Delta=2\Delta +\lfloor k/2\rfloor\le3$, implying $\Delta=1$. Hence, $G=K_2$. If $k\ge2$ then $G$ is $(k,\Delta)$-degenerate and Theorem~\ref{abdegenericity} gives $\CFP(G) \le  2\Delta +\lfloor k/2\rfloor-1$ unless $k=\Delta=2$. Therefore $G$ is a cycle. Now,  in view of \eqref{cycles}, $\PCF(G) \le 4$ unless $G = C_5$,
in which case $\PCF(C_5)=5$.
\end{proof}

\noindent\textbf{Remark.} Note that a bound involving only the degenericity $k$ is not possible. For example, every $\SK_n$ has $k=2$ whereas $\CFP(\SK_n)=n$ can be arbitrarily large.

\begin{corollary}
    \label{maximumdegree}
Let $G$ be a connected graph of maximum degree $\Delta\ge1$. Then
   \begin{equation} \label{5D2}
                         \CFP(G) \le  \lfloor 5\Delta/2\rfloor,
    \end{equation}
    with equality if and only if $G=K_2$ or $G=C_5$.
\end{corollary}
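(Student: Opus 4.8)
The plan is to deduce Corollary~\ref{maximumdegree} from the already-established Theorem~\ref{abdegenericity} together with the degeneracy observation made just before the preceding corollary, namely that every connected graph $G$ of maximum degree $\Delta\ge2$ and degeneracy $k\ge2$ is $(k,\Delta)$-degenerate. First I would dispose of the small cases $\Delta=1$ and $k=1$: if $\Delta=1$ then $G=K_2$ and $\CFP(K_2)=2=\lfloor 5/2\rfloor$, giving equality; if $k=1$ then $G$ is a non-trivial tree and Observation~\ref{tree} gives $\CFP(G)\le3\le\lfloor 5\Delta/2\rfloor$ (since $\Delta\ge2$ for any non-trivial tree other than $K_2$), with equality forcing $\Delta=2$, i.e.\ a path, which by~\eqref{cycles}-style reasoning never reaches the bound. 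So I may assume $k\ge2$ and $\Delta\ge2$.

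In that regime $G$ is $(k,\Delta)$-degenerate, and since $k\le\Delta$ I apply Theorem~\ref{abdegenericity} with $a=k$, $b=\Delta$ to obtain $\CFP(G)\le\max\{5,\lfloor k/2\rfloor+2\Delta-1\}$. The key inequality to verify is that this bound is at most $\lfloor 5\Delta/2\rfloor$. Since $k\le\Delta$ we have $\lfloor k/2\rfloor+2\Delta-1\le\lfloor\Delta/2\rfloor+2\Delta-1$, and a short floor computation shows $\lfloor\Delta/2\rfloor+2\Delta-1=\lfloor 5\Delta/2\rfloor-1<\lfloor 5\Delta/2\rfloor$ for all $\Delta\ge2$; moreover $5\le\lfloor 5\Delta/2\rfloor$ precisely when $\Delta\ge2$. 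This establishes the inequality~\eqref{5D2} and, crucially, shows the Theorem~\ref{abdegenericity} branch gives a value strictly below $\lfloor 5\Delta/2\rfloor$ whenever $k<\Delta$ or whenever $\lfloor k/2\rfloor+2\Delta-1>5$.

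For the equality characterization, the main point is to pin down exactly when $\CFP(G)=\lfloor 5\Delta/2\rfloor$ can occur. The computation above shows that for $k\ge2$ and $\Delta\ge2$ the degeneracy bound $\lfloor k/2\rfloor+2\Delta-1$ never reaches $\lfloor 5\Delta/2\rfloor$ (it falls at least one short even in the worst case $k=\Delta$); hence equality can only be forced through the $\max$ attaining the value $5$, i.e.\ we need $\lfloor 5\Delta/2\rfloor=5$, which means $\Delta=2$, and simultaneously the degeneracy branch must not exceed $5$. With $\Delta=2$ the graph is a path or a cycle, and by~\eqref{cycles} together with Observation~\ref{tree} the only $2$-regular graph attaining $\CFP=5$ is $C_5$. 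I would present this by invoking the previous corollary's equality analysis essentially verbatim: for $k\ge2,\Delta\ge2$ reduce to $G=C_5$ exactly as there, and combine with the $K_2$ case from the degenerate branch.

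The hard part will be handling the floor functions cleanly and making the equality argument airtight rather than merely plausible: one must confirm that for every pair $(k,\Delta)$ with $2\le k\le\Delta$ and $(\Delta,k)\ne(2,2)$ the bound from Theorem~\ref{abdegenericity} is strictly smaller than $\lfloor 5\Delta/2\rfloor$, so that the only route to equality is through cycles with $\Delta=k=2$. I expect no genuine difficulty here—the inequality $\lfloor k/2\rfloor+2\Delta-1\le\lfloor\Delta/2\rfloor+2\Delta-1=\lfloor 5\Delta/2\rfloor-1$ does all the work—but it is the step where a careless floor estimate could let a spurious equality case slip through, so I would verify it with care, and then simply cite the cycle/tree values already recorded to conclude $G\in\{K_2,C_5\}$.
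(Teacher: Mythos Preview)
Your proposal is correct and follows essentially the same route as the paper, which leaves this corollary without proof as an immediate consequence of the preceding corollary (the $2\Delta+\lfloor k/2\rfloor$ bound) via the inequality $k\le\Delta$ and hence $2\Delta+\lfloor k/2\rfloor\le\lfloor 5\Delta/2\rfloor$. Your treatment of the tree branch is slightly garbled in wording (``equality forcing $\Delta=2$'' is not quite right, since for $k=1$, $G\neq K_2$ one has $\CFP(G)\le3<5\le\lfloor 5\Delta/2\rfloor$ and equality is simply impossible), but the conclusion and the main floor computation $\lfloor\Delta/2\rfloor+2\Delta-1=\lfloor 5\Delta/2\rfloor-1$ are exactly what is needed.
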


\noindent\textbf{Remark.} In conclusion to this section we mention that  a slightly more general concept than  $(a,b)$-degeneracy still allows to apply the technique presented in Theorem~\ref{abdegenericity}.
For an integer $h\ge 4$ a graph $G$ is said to be \textit{$h$-edge degenerate}  if every induced subgraph $H$ (including $G$) has the following property:
there exists a $1^-$-vertex $u$ in $H$ (i.e. $\deg_H(u) \le 1$) or there is an edge $vw$  such that $\deg_H(v) +\deg_H(w) \le  h$.  Clearly, if $G$ is $(a,b)$-degenerate then it is $(a+b)$-edge degenerate. Notice that,  $h$-edge degeneracy is hereditary,  by definition.

Now Theorem~\ref{abdegenericity} can be applied to $h$-edge degenerate graphs by observing that $2b +\lfloor a/2\rfloor \le 2(h-2) +1$; consequently, every $h$-edge degenerate graph $G$ has $\PCF(G) \le  \max\{5, 2(h-2)\}$.

%

\section{Application to PCF colorability of planar graphs}

The recent work of Fabrici et al.~\cite{FabLuzRinSot22}, where the notion of `proper conflict-free coloring' was introduced, focuses on planar and outerplanar graphs. Among other things, \cite{FabLuzRinSot22} contains a `proof from the Book' of the fact that every planar graph is PCF $8$-colorable (cf. Theorem~5.3). The authors of that paper also provide an example of a planar graph (having girth $3$) which requires $6$ colors for a PCF coloring, and purport that $6$ colors always suffice (cf. Conjecture~9.1~(a)). For the class of outerplanar graphs, they establish the tight upper bound of $5$ on the PCF chromatic number (cf. Corollary~5.1).

Throughout this paper we have been using standard graph theory terminology. However, we recall here some notions relevant for this section. An upper bound on the \textit{average degree} $\mathrm{ad}(G)=\frac{2|E(G)|}{|V(G)|}$ of graph $G$ forces sparse local configurations. In order to use such structure results in inductive proofs, one also requires the same bound in all subgraphs. Recall that \textit{maximum average degree}, written $\mathrm{mad}(G)$, is $\mathrm{mad}(G)=\max_{H\subseteq G}\mathrm{ad}(H)$. The \textit{girth} $g(G)$ of $G$ is the length of a shortest cycle in $G$.

In this section, by making use of Theorem~\ref{abdegenericity} and other structure results about sparse local configurations, we deduce several sufficient conditions for PCF $k$-colorability ($4\le k\le6$) in terms of the maximum average degree, and also in terms of the girth of planar graphs. 

First we discuss PCF $k$-colorability in terms of the maximum average degree. For item $(iii)$ of the next result, recall from Proposition~\ref{blocks} that every member of the family $\mathcal{F}=\{G: G$ is a non-trivial connected graph such that every block of $G$ is isomorphic to $C_5\}$ requires $5$ colors for a PCF coloring.

\begin{theorem}
    \label{mad}
Let $G$ be a connected graph with maximum average degree $\mathrm{mad}(G)=m$. Then:
\begin{itemize}
\item[$(i)$] $m<\frac{8}{3}$ implies $\PCF(G)\le6$;
\item[$(ii)$] $m<\frac{5}{2}$ implies $\PCF(G)\le5$;
\item[$(iii)$] $m<\frac{24}{11}$ implies  $\PCF(G)\le 4$, unless $G\in \mathcal{F}$.

\end{itemize}
\end{theorem}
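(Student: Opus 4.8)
The plan is to prove all three items via the discharging method, using Theorem~\ref{abdegenericity} as the endgame in each case. For a given bound on $\mathrm{mad}(G)$, the strategy is to show that $G$ is $(a,b)$-degenerate for an appropriate pair $(a,b)$, and then invoke~\eqref{ab}. Concretely, recalling that $\lfloor a/2\rfloor+2b-1$ should not exceed the target: for $(i)$ I would aim to establish $(3,3)$-degeneracy (giving $\lfloor 3/2\rfloor+6-1=6$); for $(ii)$ either $(2,3)$- or $(4,2)$-degeneracy (both giving $\max\{5,5\}=5$); and for $(iii)$ the bound $4$ is more delicate since $\lfloor a/2\rfloor+2b-1\le4$ forces $b=2$ and $a\le3$, i.e. essentially $(2,2)$- or $(3,2)$-degeneracy, but $(2,2)$-degeneracy alone only yields $5$ (the family $\mathcal F$ being the obstruction), so item $(iii)$ needs a genuinely separate argument rather than a clean reduction to~\eqref{ab}.

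For items $(i)$ and $(ii)$ I would argue by contradiction: suppose $G$ fails the relevant degeneracy property, so some induced subgraph $H$ has minimum degree $\ge2$ and contains no $(a^-,b^-)$ edge of the required type. I would then set up a discharging procedure on $H$: assign each vertex $v$ the initial charge $\deg_H(v)-m$, so the total charge is $\sum_v(\deg_H(v)-m)=2|E(H)|-m|V(H)|=|V(H)|(\mathrm{ad}(H)-m)<0$ by the hypothesis $\mathrm{mad}(G)=m$. The contradiction will come from designing discharging rules (charge flowing from high-degree vertices to their low-degree, typically degree-$2$, neighbors) so that every vertex ends with nonnegative final charge, contradicting the negative total. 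For $(i)$ the forbidden configuration is a $(3^-,3^-)$ edge together with no $1$-vertex, so $2$-vertices must have all neighbors of degree $\ge4$, and I would send enough charge (say $\frac{m-2}{2}$ or a similar fraction) along each edge from a $4^+$-vertex to an adjacent $2$-vertex; verifying nonnegativity of final charges for each vertex degree is the routine bookkeeping. Item $(ii)$ is analogous with the threshold $\frac52$ and the forbidden $(2^-,3^-)$ edge.

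For item $(iii)$, where the target is $4$ and the family $\mathcal F$ must be excluded, I expect the real work. Here I cannot simply reduce to Theorem~\ref{abdegenericity}, so I would instead run a direct minimal-counterexample argument tailored to $4$-colorability, combining a discharging analysis under the threshold $\frac{24}{11}$ with an extension argument in the spirit of the proofs of Observation~\ref{22} and Theorem~\ref{66}. The plan is to show that a minimal non-($\mathcal F$, PCF-$4$-colorable) graph $G$ with $\mathrm{mad}(G)<\frac{24}{11}$ must, after discharging, contain a reducible configuration---most likely a short path of $2$-vertices attached to the rest of the graph, or a degree-$2$ vertex with controlled neighborhood---that can be removed, PCF-$4$-colored by minimality, and then have the coloring extended. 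The coefficient $\frac{24}{11}$ suggests the discharging will distinguish $2$-vertices by the lengths of the threads they lie on and by the degrees of the thread endpoints, so the rules must be finer than in $(i)$ and $(ii)$. The main obstacle will be simultaneously ensuring the extension step succeeds (enough available colors remain, as in the case analyses of Theorem~\ref{abdegenericity}) while accounting for exactly the $C_5$-block configurations that force membership in $\mathcal F$; these are precisely the tight cases where the discharging yields zero surplus, so the characterization of equality must be threaded carefully through the argument.
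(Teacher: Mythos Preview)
Your plan for item~$(i)$ is sound: showing $(3,3)$-degeneracy under $\mathrm{mad}(G)<\tfrac{8}{3}$ via discharging works, and then Theorem~\ref{abdegenericity} gives $\max\{5,\lfloor 3/2\rfloor+2\cdot3-1\}=6$. The paper instead cites a known structural result yielding the stronger $(2,3)$-degeneracy, but either route lands on~$6$.

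The real gap is in item~$(ii)$. Neither of your proposed degeneracy types delivers the bound~$5$ through Theorem~\ref{abdegenericity}. Recall the hypothesis $2\le a\le b$: for $(2,3)$-degeneracy the formula gives $\lfloor 2/2\rfloor+2\cdot 3-1=6$, not~$5$; and ``$(4,2)$-degeneracy'' is outside the stated range, while its legitimate reading $(2,4)$ gives~$8$. The only pair $(a,b)$ with $a\le b$ and $\lfloor a/2\rfloor+2b-1\le 5$ is $(2,2)$, but $\mathrm{mad}(G)<\tfrac{5}{2}$ does \emph{not} force $(2,2)$-degeneracy: the complete subdivision of any $3$-regular graph has average degree $\tfrac{12}{5}<\tfrac{5}{2}$, minimum degree~$2$, and no two adjacent $2$-vertices. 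So a clean reduction to Theorem~\ref{abdegenericity} is impossible here. The paper handles~$(ii)$ differently: it invokes a structure theorem of Cranston--Kim--Yu stating that a graph with $\mathrm{ad}<\tfrac{5}{2}$ and $\delta\ge2$ contains either two adjacent $2$-vertices \emph{or} a $3$-vertex whose three neighbours are all $2$-vertices, one of them having another $3^-$-neighbour. The first configuration is handled as in Theorem~\ref{abdegenericity}, but the second requires deleting four vertices simultaneously and a tailored list-extension argument that does not fit the $(a,b)$-degeneracy framework.

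For item~$(iii)$ your instincts are right that Theorem~\ref{abdegenericity} alone is not enough and that threads are the key reducible configurations. The specific threshold $\tfrac{24}{11}=2+\tfrac{2}{3\cdot 4-1}$ is exactly what guarantees a $4$-thread via the standard lemma on $\ell$-threads in sparse graphs; the paper's argument deletes the four internal vertices of such a thread, colours the remainder by minimality, and extends. The subtle part, which your outline only gestures at, is ensuring the smaller graph is not in~$\mathcal{F}$ (so that the inductive hypothesis applies) and, separately, that members of~$\mathcal{F}$ with a pendant edge attached \emph{are} PCF $4$-colourable; both of these need to be established explicitly before the thread-reduction step goes through.
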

\begin{proof}
To prove $(i)$ we use the following special case of Remark~2.1 from~\cite{CraWes16} (see also bottom pp.~19 in the same reference): \textit{Every graph $G$ with $\mathrm{mad}(G)<\frac{8}{3}$ and $\delta(G)\ge2$ has a $2$-vertex adjacent with a $3^-$-vertex.} Hence, every graph with maximum average degree less than $\frac{8}{3}$ is $(2,3)$-degenerate. Now, from Theorem~\ref{abdegenericity} we deduce that $\PCF(G)\le6$.

\bigskip

Turning to $(ii)$, suppose $G$ is a minimal counter-example. Thus $G$ is a connected graph of order at least $6$. Note that the minimum degree $\delta(G)$ equals $2$. Indeed, for otherwise there is a $1^-$-vertex $u$. But then, the minimality choice of $G$ yields a PCF $5$-coloring of $G-u$, which readily extends to a PCF $5$-coloring of $G$ by forbidding at most two colors at $u$. The following structure result has been proved by Cranston et al.~\cite{CraKimYu10}: \textit{If $G$ is a connected graph with at least four vertices, having $\mathrm{ad}(G)<\frac{5}{2}$ and $\delta(G)\ge2$, then $G$ contains adjacent two vertices or a $3$-vertex having three $2$-neighbors one of which has a second $3$-neighbor.}
Hence our minimal counter-example $G$ has adjacent $2$-vertices or a $3$-vertex of the specified kind. We consider the two possibilities separately:

\smallskip
\noindent
{\bf Case 1:} {\em There are two adjacent vertices $v,w$ with $\deg(v)=\deg(w)=2$.} Denote by $v'$ the other neighbor of $v$, and similarly denote by $w'$ the other neighbor of $w$. (It is not excluded that $v'=w'$.) Let $G'=G-\{v,w\}$. Since $\delta(G)=2$ and $G\neq C_3$, the graph $G'$ is isolate-free. By the minimality choice of $G$, take a PCF $5$-coloring $c$ of $G'$. We extend to $G$ as follows. Forbid at $v$ the colors $c(v'),c(w')$ and a color with unique appearance in $N_{G'}(v')$. This leaves at least two colors available for $v$. Analogously, there are at least two colors that are available for $w$. So distinct available colors can be assigned to $v$ and $w$.

\smallskip
\noindent
{\bf Case 2:} {\em There is a $3$-vertex $v$ having three $2$-neighbors $x,y,z$ such that $x$ has another $3$-neighbor.}  Clearly $x$ is not adjacent to $y$ nor to $z$. We may assume that $y$ and $z$ are non-adjacent as well, for otherwise we are back in the previous case. Let $x',y',z'$ be the other neighbors of $x,y,z$, respectively. We may also assume that $\deg(y'),\deg(z')\ge 3$. Set $G'=G-\{v,x,y,z\}$ and notice that there are no isolated vertices in $G'$. Once again, the minimality choice of $G$ provides us with a PCF $5$-coloring $c$ of $G'$. Forbid at $v$ the colors $c(x'),c(y'),c(z')$. Thus at least two colors are available for $v$. Forbid at $x$ the color $c(x')$, thus leaving at least four available colors. Forbid at $y$ the color $c(y')$ and a color with unique appearance in $N_{G'}(y')$. This leaves at least three colors that are available for $y$. Analogously, there are at least three available colors for $z$. Assign an available color to $v$, and forbid this colors at $x,y,z$. This leaves at least $3$ available colors at $x$, and at least two available colors at $y$ and $z$, respectively. So distinct available colors can be assigned to $x,y$ and $z$.

\smallskip

Since we have obtained a contradiction in every possible case, this settles $(ii)$.

\bigskip

Let us discuss $(iii)$. Again we work with a minimal counter-example $G$, and give several claims that clarify the structure of $G$.

\bigskip
\noindent\textbf{Claim 1.} \textit{$G$ is not obtainable from a member of $\mathcal{F}$ by attaching a pendant edge.} Let $v_1,v_2,v_3,v_4,v_5$ be the vertices of a $C_5$ in circular order, and let $v_0$ be a leaf attached to $v_1$. Color $v_0$ and $v_3$ by 1, $v_1$ by $2$, $v_2$ and $v_5$ by $3$, and $v_4$ by $4$. This is a PCF $4$-coloring of the graph obtained from $C_5$ by attaching a pendant edge. Now consider a graph $H\in\mathcal{F}$
of connectivity $1$. We induct on the number of blocks $b(H)\ge2$ that the graph $H+e$, obtained from $H$ by attaching a pendant edge $e$, is PCF $4$-colorable. Let $B$ be an end-block of $H$ such that $e$ is not attached to an internal vertex of $B$. Let $w_1,w_2,w_3,w_4,w_5$ be an enumeration of $V(B)$ on a circular traversing that starts at the cut vertex $w_1$. The graph $H'=H-\{w_2,w_3,w_4,w_5\}$ is also a member of $\mathcal{F}$ and has $b(H')=b(H)-1$ blocks. By the inductive hypothesis, the graph $H'+e$ admits a PCF coloring $c$ with color set ${1,2,3,4}$. Upon permutation of colors, assume that $c(w_1)=1$, and that the color $2$ appears exactly once in $N_{H'+e}(w_1)$. Extend $c$ to $H+e$ by setting $c(w_2)=3,c(w_3)=4,c(w_4)=2$ and $c(w_5)=3$. This clearly gives a PCF $4$-coloring of $G$, which completes the inductive step.

\bigskip
\noindent\textbf{Claim 2.} \textit{$\delta(G)=2$.} For otherwise, there is a $1$-vertex $u$. However, by Claim~1, the connected graph $G-u$ is not in $\mathcal{F}$. Hence, it is PCF $4$-colorable. Notice that any PCF $4$-coloring of $G-u$ readily extends to $G$ by forbidding at most two colors at $u$.

\bigskip
\noindent\textbf{Claim 3.} \textit{There is a $4$-thread in $G$.} This is implied by Claim~2 and the following well-known result (see e.g. Lemma~2.5 in~\cite{CraWes16}): \textit{Any graph $G$ with $\mathrm{ad}(G)<2+\frac{2}{3\ell-1}$ that has no $2$-regular component contains a $1^-$-vertex or an $\ell$-thread.} Indeed, simply take $\ell=4$ and observe that $G$ is not a cycle.

So let $v_0v_1v_2v_3v_4v_5$ be a $4$-thread in $G$ (the vertices $v_0$ and $v_5$ may coincide).

\bigskip
\noindent\textbf{Claim 4.} \textit{No component of $G'=G-\{v_1,v_2,v_3,v_4\}$ belongs in $\mathcal{F}$.} Suppose the opposite. Then $v_0\neq v_5$, for otherwise $G\in\mathcal{F}$. Notice that $G'$ has at most two components. Let us look into the possibility of two components, say $H$ and $K$,  such that $v_0\in V(H)$ an $v_5\in V(K)$. By Claim~1 and the minimality choice of $G$, both $H+v_0v_1$ and $K+v_4v_5$ admit PCF colorings with color set $\{1,2,3,4\}$. Upon permutation of colors, we may assume that $v_1$ and $v_4$ are colored by $1$, whereas $v_0$ and $v_5$ are colored by $2$. Then simply color $v_3$ by $3$ and $v_4$ by $4$, and we have a PCF $4$-coloring of $G$. This conclusion discards the possibility of two components in $G'$. Hence, we are left with the case of a single component, say $H\in\mathcal{F}$.

Suppose first that at least one of the vertices $v_0,v_5$ is not a cut vertex of $G'$. Say $v_5$ is such. By Claim~1, the graph $H+v_0v_1$ admits a PCF coloring $c$ with color set $\{1,2,3,4\}$. Without loss of generality, assume that $c(v_1)=1$ and $c(v_0)=2$. Hence the list $L(v_2)=\{3,4\}$ consists of the colors that are available for $v_2$. Similarly, $L(v_3)=\{2,3,4\}\backslash \{c(v_5)\}$ and $L(v_4)=\{1,2,3,4\}\backslash\{c(v_5)\}$ (as $\deg{G'}(v_5)=2$). So we are able to color $v_2, v_3, v_4$ with distinct colors from their lists. Suppose now that both $v_0,v_5$ are cut vertices of $G'$. Let $K$ be a component of $G'-v_0$ that does not contain $v_5$. The graph $G-V(K)$ is PCF $4$-colorable, by the minimality choice of $G$. However, a straightforward induction on the number of blocks of the $v$-lobe $[V(K)\cup\{v\}]$ shows that any PCF $4$-coloring of $G-V(K)$ extends to a PCF $4$-coloring of $G$. The obtained contradiction settles the claim.

We are ready to complete our argument.

\bigskip
\noindent\textbf{Claim 5.} \textit{$\PCF(G)\le4$.}  Consider once again the graph $G'=G-\{v_1,v_2,v_3,v_4\}$. By the minimality choice of $G$ and Claim~4, there is a PCF $4$-coloring $c$ of $G'$. Let $v_{-1}$ and $v_6$ be neighbors of $v_0$ and $v_5$ in $G'$, respectively, such that $c(v_0)$ occurs exactly once in $N_{G'}(v_0)$, and similarly $c(v_6)$ occurs exactly once in $N_{G'}(v_5)$. Our objective is to extend $c$ to a PCF $4$-coloring of $G$. The initial lists of available colors for the vertices in $V(G)\backslash V(G')$ are the following: $L(v_1)=\{1,2,3,4\}\backslash\{c(v_{-1}),c(v_0)\}, L(v_2)=\{1,2,3,4\}\backslash\{c(v_0)\}, L(v_3)=\{1,2,3,4\}\backslash\{c(v_5)\},$ and $L(v_4)=\{1,2,3,4\}\backslash\{c(v_{5}),c(v_6)\}$.
Hence $|L(v_1)|=|L(v_4)|=2$, $|L(v_2)|=|L(v_3)|=3$. If $L(v_1)\cap L(v_4)\neq\emptyset$ then assign to both $v_1$ and $v_4$ a common available color $x$, and pick distinct colors from $L(v_2)\backslash\{x\}$ and $L(v_3)\backslash\{x\}$ for $v_2$ and $v_3$, respectively. This clearly gives a PCF $4$-coloring of $G$. Therefore we may assume that $L(v_1)\cap L(v_4)=\emptyset$. Upon permutation of colors, we have $L(v_1)=\{1,2\}, L(v_4)=\{3,4\}$ and $L(v_2)=\{1,2,3\},L(v_3)=\{2,3,4\}$. Set $c(v_i)=i$ for $i=1,\ldots,4$, and we are done.  The obtained contradiction completes our proof.
\end{proof}

Let us turn to girth-constrained planar graphs. For proving the next result we mostly rely on Theorem~\ref{mad} and the following well-known fact: \textit{every planar graph $G$ with girth at least $g$ satisfies $\mathrm{mad}(G)<\frac{2g}{g-2}$} (see~\cite{BorKosNesRasSop99} and also~\cite{CraWes16}, pp.~13).

\begin{theorem}
    \label{planar}
Let $G$ be a planar graph with girth $g(G)=g$. Then:
\begin{itemize}
\item[$(i)$] $g\ge7$ implies $\PCF(G)\le6$;
\item[$(ii)$] $g\ge10$ implies $\PCF(G)\le5$;
\item[$(iii)$] $g\ge24$ implies $\PCF(G)\le4$.
\end{itemize}
\end{theorem}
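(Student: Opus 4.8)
The plan is to derive each of the three implications directly from the corresponding item of Theorem~\ref{mad} via the standard planar girth-to-density inequality quoted just above the statement, namely that every planar graph $G$ with girth at least $g$ satisfies $\mathrm{mad}(G)<\frac{2g}{g-2}$. The strategy is entirely computational: for each target PCF bound I would determine the girth threshold $g$ that forces $\frac{2g}{g-2}$ below the maximum-average-degree threshold appearing in Theorem~\ref{mad}, and then invoke that item.

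For item $(i)$, I would check that $g\ge7$ gives $\mathrm{mad}(G)<\frac{2\cdot7}{7-2}=\frac{14}{5}=2.8$. Since $\frac{14}{5}<\frac{8}{3}$ fails (note $\frac{8}{3}\approx2.667<2.8$), a direct comparison is not immediate, so the real first step is to solve $\frac{2g}{g-2}\le\frac{8}{3}$ for $g$; this rearranges to $6g\le 8(g-2)$, i.e. $16\le 2g$, i.e. $g\ge8$. Hence $g\ge7$ alone does \emph{not} suffice through Theorem~\ref{mad}$(i)$, which means item $(i)$ must instead be proved by a separate discharging argument directly at girth $7$, rather than by reduction to $\mathrm{mad}$. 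This is the main obstacle: the girth-$7$ case is genuinely tighter than what the $\mathrm{mad}<8/3$ hypothesis delivers, so I expect the authors to supply an independent structural/discharging proof for $(i)$ (showing that girth-$7$ planar graphs are $(2,3)$-degenerate, or contain one of the reducible configurations used in Theorem~\ref{abdegenericity}), even though the remaining two items are pure corollaries.

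For items $(ii)$ and $(iii)$ the reductions go through cleanly, and I would present them as one-line deductions. For $(ii)$: solving $\frac{2g}{g-2}\le\frac{5}{2}$ gives $4g\le5(g-2)$, i.e. $g\ge10$; thus $g\ge10$ forces $\mathrm{mad}(G)<\frac{5}{2}$, and Theorem~\ref{mad}$(ii)$ yields $\PCF(G)\le5$. For $(iii)$: solving $\frac{2g}{g-2}\le\frac{24}{11}$ gives $22g\le24(g-2)$, i.e. $48\le2g$, i.e. $g\ge24$; thus $g\ge24$ forces $\mathrm{mad}(G)<\frac{24}{11}$, and Theorem~\ref{mad}$(iii)$ gives $\PCF(G)\le4$ unless $G\in\mathcal{F}$. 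Here I would note that members of $\mathcal{F}$ have girth exactly $5$ (each block is $C_5$), so the girth-$24$ hypothesis automatically excludes the exceptional family, letting me drop the ``unless'' clause and conclude $\PCF(G)\le4$ outright. I would structure the writeup so that $(ii)$ and $(iii)$ are immediate, and devote the bulk of the proof to whichever approach settles $(i)$ at the sharper girth bound.
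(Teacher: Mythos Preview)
Your proposal is correct and matches the paper's proof essentially line for line: items $(ii)$ and $(iii)$ are handled exactly by the girth-to-$\mathrm{mad}$ reduction you describe (and the paper, like you, implicitly uses that members of $\mathcal{F}$ have girth $5$ to drop the exceptional clause in $(iii)$). For $(i)$ your diagnosis is also spot on: the paper does not run a new discharging argument but simply cites an existing structural lemma (Lemma~3.7 of Cranston--West~\cite{CraWes16}) stating that every planar graph of girth at least $7$ and minimum degree at least $2$ has a $2$-vertex with a $3^-$-neighbor, which gives $(2,3)$-degeneracy and hence $\PCF(G)\le6$ via Theorem~\ref{abdegenericity}---precisely the route you anticipated.
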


\begin{proof}
Concerning $(i)$, in view of the above mentioned inequality $\mathrm{mad}(G)<\frac{2g}{g-2}$, planar graphs $G$ of girth $g\ge8$ satisfy $\mathrm{mad}(G)<\frac{8}{3}$; hence they have $\PCF(G)\le 6$ by Theorem~\ref{mad} $(i)$. However, in some cases, planarity permits a stronger result in regard to girth. Namely, Lemma~3.7 in~\cite{CraWes16} reads: \textit{Every planar graph $G$ with girth at least $7$ and minimum degree at least $2$ has a $2$-vertex with a $3^-$-neighbor.} Consequently, every planar graph $G$ of girth $g\ge7$ is $(2,3)$-degenerate, and Theorem~\ref{abdegenericity} applies.

\bigskip

As for $(ii)$, the inequality $\frac{2g}{g-2}\le\frac{5}{2}$ reduces to $g\ge10$. Hence, every planar graph of girth $g\ge10$ has maximum average degree strictly less than $\frac{5}{2}$, and Theorem~\ref{mad} $(ii)$ applies.

\bigskip

And similarly for $(iii)$, the inequality $\frac{2g}{g-2}\le\frac{24}{11}$ reduces to $g\ge24$. Consequently, every planar graph of girth $g\ge24$ has maximum average degree strictly less than $\frac{24}{11}$, and we simply use Theorem~\ref{mad} $(iii)$.
\end{proof}

\bigskip

We end this brief discussion on PCF $4$-colorability with a result about outerplanar graphs.

\begin{theorem}
    \label{out}
Every outerplanar graph $G$ of girth $g\ge6$ is PCF $4$-colorable.
\end{theorem}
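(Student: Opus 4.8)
The plan is to prove that every outerplanar graph $G$ of girth $g\ge6$ satisfies $\PCF(G)\le4$ via a discharging-flavored structural analysis combined with the reducibility technique used throughout this section. First I would recall that outerplanar graphs are precisely the graphs with no $K_4$ or $K_{2,3}$ minor, and that every simple outerplanar graph $G$ satisfies $\mathrm{mad}(G)<\tfrac{2g}{g-2}$ together with the stronger local sparsity afforded by outerplanarity. For girth $g\ge6$ this gives $\mathrm{mad}(G)<3$, which is not quite strong enough to invoke Theorem~\ref{mad}$(iii)$ directly (that would require $\mathrm{mad}(G)<\tfrac{24}{11}$), so the crux is to extract from outerplanarity the specific reducible configurations that the bare $\mathrm{mad}$ bound does not supply.

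The approach I would take is to argue by contradiction with a minimal counter-example $G$, establishing in turn that $G$ is connected, isolate-free, and has $\delta(G)=2$ (the $1^-$-vertex reduction being immediate, as in Claim~1 of Theorem~\ref{abdegenericity}, since forbidding at most two colors at a leaf always leaves a free color out of four). The heart of the proof is a structural claim: I would show that any outerplanar graph with $\delta(G)\ge2$ and girth $g\ge6$ contains either two adjacent $2$-vertices or a long thread (e.g.\ a $4$-thread, matching the threshold exploited in Claim~3 of Theorem~\ref{mad}$(iii)$). To see this one leverages the weak dual (a forest for outerplanar graphs) and the fact that, with girth $\ge6$, the boundary of every inner face has length at least $6$; an endblock/ear-decomposition or a leaf-face argument in the weak dual then forces a face whose boundary contains a run of consecutive $2$-vertices long enough to yield the desired configuration. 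This replaces the generic $\mathrm{mad}$ input of Theorem~\ref{mad}$(iii)$ with a planarity-specific extraction, exactly as the proof of Theorem~\ref{planar}$(i)$ upgraded girth $8$ to girth $7$ by using a sharper outerplanar lemma instead of the raw $\mathrm{mad}$ bound.

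Once such a configuration is located, I would delete its internal $2$-vertices to obtain a smaller outerplanar graph $G'$ of girth $\ge6$, which by minimality admits a PCF $4$-coloring $c$, and then re-extend $c$ across the deleted thread using the list-coloring bookkeeping already rehearsed in Claim~5 of Theorem~\ref{mad}$(iii)$: each interior $2$-vertex of a thread has only its two neighbors to avoid for properness, giving lists of size $\ge2$, while conflict-freeness at the two $3^+$-endpoints is secured because a sufficiently long thread lets us place a color with a single occurrence in each endpoint's neighborhood. The only delicate point, as in Claim~4 of that theorem, is to ensure $G'$ has no component forcing an obstruction; here outerplanar graphs of girth $g\ge6$ cannot have every block a $C_5$ (since $C_5$ has girth $5<6$), so the family $\mathcal{F}$ simply does not intrude, which is precisely why the conclusion is a clean $\PCF(G)\le4$ with no exceptional graphs.

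The main obstacle I expect is the structural claim itself: proving that girth $\ge6$ plus outerplanarity guarantees a reducible thread (or adjacent $2$-vertices) rather than merely controlling the average degree. The subtlety is that an outerplanar graph can have many $3^+$-vertices packed along the outer cycle, so I must argue carefully that the length-$\ge6$ faces, together with the tree structure of the weak dual, force the $2$-vertices to cluster into a long enough thread in at least one endblock; a naive global count of $2$-vertices does not by itself localize them. Handling the case where the thread's endpoints coincide (a cycle-thread), and verifying conflict-freeness when an endpoint's unique-color neighbor lies outside the deleted portion, will require the same care as in the threads appearing in Theorems~\ref{66} and~\ref{mad}, but these are routine once the structural claim is in hand.
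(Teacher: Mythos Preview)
Your proposal is essentially the paper's own proof: take a minimal counter-example, observe $\delta(G)=2$, locate a $4$-thread, and reduce via the list bookkeeping of Claim~5 of Theorem~\ref{mad}$(iii)$, with the $\mathcal{F}$ obstruction vacuous since girth $\ge6$. The only differences are that the paper obtains the $4$-thread by citing a known fact (every outerplanar graph with $\delta\ge2$ and girth $g$ has a $(g-2)$-thread) rather than arguing via the weak dual, and that your alternative configuration of ``two adjacent $2$-vertices'' should be dropped---it is not by itself reducible for four colors (only three forbidden colors at each vertex leaves just one free, so you cannot guarantee distinct colors), and in any case the cited structural fact hands you the full $4$-thread directly.
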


\begin{proof}
We invite the reader to check that the proof of Claim~5 used while proving Theorem~\ref{mad} applies verbatim to the following: \textit{No minimal counter-example to Theorem~\ref{out} contains a $4$-thread.} On the other hand, it is easily argued that every outerplanar graph of girth $g$ and minimum degree $\delta\ge2$ contains a $(g-2)$-thread (see e.g.~\cite{MonOchPinRasSop08}, Proposition~19). So it suffices to observe that any minimal counter-example to Theorem~\ref{out} is of minimum degree $2$.
\end{proof}

The girth constrain in Theorem~\ref{out} is sharp in view of Proposition~\ref{blocks}.

\bigskip

Notice that one cannot hope for analogous non-trivial sufficient conditions granting PCF $3$-colorability. More precisely, no value $m>2$ would guarantee that graphs $G$ with $\mathrm{mad}(G)<m$ have $\PCF(G)\le3$. Similarly, there is no value $g$ such that planar or outer planar graphs of girth at least $g$ are PCF $3$-colorable. Namely, every $C_{3k+2}$ requires at least four colors, has $\mathrm{mad}(C_{3k+2})=2$, and is planar of girth $3k+2$.

\section{Further work}
 In view of Observations~\ref{complete subdivision},~\ref{22} and~\ref{66}, we propose the following.

\begin{problem}
For $k=4,5$ determine the minimal pairs $(d,g)$ such that for every graph $G$ having distance at least $d$ between any two $3^+$-vertices and girth at least $g$, it holds that $\PCF(G)\le k$.
\end{problem}

As already observed in the introduction, the ratio $\PCF(G)/\chi(G)$ can acquire arbitrarily high value. Then again, by the remark after Proposition~\ref{K13}, this ratio is bounded by a constant for the class of claw-free graphs $G$. This happens to be the case for the class of planar graphs as well, and for the class of $k$-subdivisions with $k\ge2$.

\begin{problem}
Find other `generic' graph families $\mathcal{G}$ for which there exists a constant $c=c(\mathcal{G})$ such that $\PCF(G)/\chi(G) \le c$ for every $G\in\mathcal{G}$.
\end{problem}

%

Corollary~\ref{maximumdegree} gives an upper bound on the PCF chromatic number in terms of the maximum degree. The established bound (of $\lfloor 5\Delta/2\rfloor$ colors) is sharp for $\Delta\le2$, and open to improvement for $\Delta\ge3$, which gives rise to the following problem.

\begin{problem}
    \label{function}
Find the function $f: \mathbb{N}\to \mathbb{N}$ such that
                          $$f(\Delta) = \max \PCF(G)\,$$
    where the maximum is running over all graphs $G$ of maximum degree $\Delta$.
\end{problem}

Notice that $\Delta+1\le f(\Delta)\le 5\Delta/2$. The three initial values of $f$ are: $f(1)=2,f(2)=5$  and $f(3)=4$, as explained below.

\medskip

It is our belief that for some positive constant $C$, it turns out that $\Delta+C$ colors always suffice. In that direction, the following holds true for connected graphs $G$ having one of the three initial values of $\Delta$. If $\Delta=1$ then $\CFP(G)\leq2$. If $\Delta=2$ then $\CFP(G)\leq5$, and moreover $\CFP(G)\leq4$ if $C_5$ is excepted.
As for $\Delta=3$, recall that a \textit{linear coloring} of a graph is a proper coloring such that each pair of color classes induces a linear forest, that is a union of disjoint paths. A linear coloring  is said to be \textit{superlinear} if the neighbors of every $2$-vertex receive different colors. The following is implied by the main result of~\cite{LiuYu13} (see Theorem~2): \textit{If $G\neq K_{3,3}$ is a connected graph of maximum degree $3$, then $G$ is superlinearly $4$-choosable.} As it is readily checked that $K_{3,3}$ is superlinearly $4$-colorable, it follows that every connected graph of maximum degree $3$ admits a superlinear $4$-coloring. But notice that such a coloring is precisely a PCF coloring of $G$. Thus, if $\Delta=3$ then $\CFP(G) \le 4$.

\medskip

We were not able to find any graph of maximum degree $\Delta=4$ or $5$ that requires more than $\Delta+1$ colors. So we are intrigued and enticed to propose the following extremely bold generalization of Conjecture~\ref{conj.D+1} from the introduction.

\begin{conjecture} If $G$ is a connected graph of maximum degree $\Delta\geq3$, then
                      $$\CFP(G)  \le \Delta+1.$$
\end{conjecture}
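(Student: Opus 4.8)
The plan is to argue by contradiction with a minimal counterexample $G$, connected and of maximum degree $\Delta\ge 3$, on the fewest vertices, so that $G$ is not PCF $(\Delta+1)$-colorable but every proper induced subgraph is. The first stage is to clear out sparse local configurations exactly as in Theorem~\ref{abdegenericity} and Proposition~\ref{K13}: when we reinstate a deleted vertex $v$ of degree $d$, properness forbids the at most $d$ colors of $N(v)$, while preserving conflict-freeness at the neighbors forbids, for each $w\in N(v)$ whose unique-appearance color is singular, one further color, for a total of at most $2d$ forbidden colors. With $\Delta+1$ colors in hand this leaves slack whenever $d$ is small relative to $\Delta$, so the first goal is to show $G$ has large minimum degree (morally $\delta(G)>\Delta/2$), ruling out $1$-vertices, adjacent $2$-vertices, long threads, and the other reducible configurations exploited in Theorems~\ref{abdegenericity} and~\ref{mad}. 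A discharging scheme in the spirit of Theorem~\ref{mad} would then be deployed to forbid the surviving dense configurations.

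The conceptual pivot is the reformulation that a proper coloring is conflict-free precisely when every open neighborhood contains a \emph{uniquely appearing} color. This makes the conflict-free demand at $v$ a property of the colors already placed on $N(v)$, not of $c(v)$ itself; consequently a neighborhood fails only when it uses at most $d/2$ distinct colors with every present color repeated. The second stage is therefore to start from a proper $(\Delta+1)$-coloring (available by greedy coloring, with $K_{\Delta+1}$ satisfying the bound trivially since $\PCF(K_{\Delta+1})=\chi(K_{\Delta+1})=\Delta+1$) and to repair each failing neighborhood by recoloring a single neighbor to a color absent from that neighborhood, thereby manufacturing a uniquely appearing color. To control the interference between repairs one would either apply the Lov\'asz Local Lemma to a random $(\Delta+1)$-coloring, taking as bad events ``$N(v)$ has no uniquely appearing color'' and ``edge $uv$ is monochromatic'', or run an entropy-compression style recoloring argument whose potential counts the failing neighborhoods.

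The hard part will be the near-regular dense core, where every vertex has degree close to $\Delta$ and single-vertex deletion leaves no slack, so the $2d\le\Delta$ bookkeeping collapses. The genuine obstacle is that, unlike ordinary coloring, the conflict-free constraint is not monotone under the subgraph relation and admits no Kempe-chain exchange: recoloring a neighbor to fix one neighborhood can simultaneously destroy the uniquely appearing color of an adjacent neighborhood, and these dependencies need not localize. One promising route is to first settle the weaker Conjecture~\ref{conj.D+1} on the odd chromatic number, for which the parity condition is more robust under color swaps, and then upgrade an odd $(\Delta+1)$-coloring to a conflict-free one through a bounded sequence of corrections; another is to prove the bound for $\Delta$-regular connected graphs directly via a structural decomposition into a conflict-free ``skeleton'' plus a proper completion, mirroring the total-domination bound of Observation~\ref{domination}. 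Establishing that these repairs can be made globally consistent, rather than merely locally, is where the main difficulty resides, and is precisely the reason we state the bound only as a conjecture.
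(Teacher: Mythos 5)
The statement you are addressing is posed in the paper as an open conjecture, not a theorem: the paper proves only $\CFP(G)\le\lfloor 5\Delta/2\rfloor$ (Corollary~\ref{maximumdegree}) in general, and verifies the bound $\Delta+1$ only for $\Delta=3$ by importing the superlinear $4$-choosability result of Liu and Yu. Your submission is accordingly not a proof but a research programme, and you concede as much in your final sentence. The concrete gaps are these. First, the minimal-counterexample bookkeeping you borrow from Theorem~\ref{abdegenericity} and Proposition~\ref{K13} forbids up to $2d$ colors when reinstating a degree-$d$ vertex, so with only $\Delta+1$ colors it yields slack only for $d\le\Delta/2$; a general connected graph of maximum degree $\Delta$ can be $\Delta$-regular (e.g.\ $K_{\Delta,\Delta}$ or random regular graphs), so no discharging scheme can manufacture the sparse reducible configurations your first stage requires --- discharging lives off an average-degree or planarity hypothesis that is absent here. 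Second, the Lov\'asz Local Lemma setup you propose fails already for the properness constraint alone: the bad event ``edge $uv$ monochromatic'' under a uniformly random $(\Delta+1)$-coloring has probability $1/(\Delta+1)$ and dependency degree about $2\Delta$, so $e\,p\,(D+1)>1$ and the symmetric LLL does not apply; this is why LLL-type arguments in the conflict-free literature give bounds of the form $\Delta+O(\cdot)$ or $C\Delta$ rather than exactly $\Delta+1$.

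Third, the repair step has no consistency or termination guarantee: recoloring a neighbor $w$ of a failing vertex $v$ must simultaneously avoid the colors on $N(w)$ (of which there may be $\Delta$, leaving at most one admissible color that need not be absent from $N(v)$) and must not destroy the uniquely appearing color in $N(w')$ for every other neighbor $w'$ of $w$; you name this obstruction but supply no potential function or entropy-compression accounting that bounds the cascade. Finally, your suggested reduction through Conjecture~\ref{conj.D+1} is itself open, and even granting it there is no known mechanism for upgrading an odd $(\Delta+1)$-coloring to a conflict-free one: a color class of odd multiplicity $\ge 3$ in $N(v)$ certifies the parity condition but not the existence of a singleton. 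None of the three sketched routes closes on the dense near-regular core, which is exactly where the problem is hard; the statement should remain labelled a conjecture.
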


Notice that if the above conjecture turns out to be true, then the function $f$ from Problem~\ref{function} satisfies $f(\Delta)=\Delta+1$ for every $\Delta\neq2$.

%
%

\bigskip

We complete the paper by asking a pair of questions in the realm of planar graphs. If answered in the positive, they would provide significant improvements to Theorem~\ref{planar}, items $(ii)$ and $(iii)$, respectively.

\medskip

As already mentioned in Section~5, along with an example of a planar graph having PCF chromatic number $6$, it was conjectured in~\cite{FabLuzRinSot22} that every planar graph is PCF $6$-colorable. We accompany this with the following.

\begin{question}
Does every triangle-free planar graph $G$ have $\PCF(G)\leq5$?
\end{question}

By Proposition~\ref{blocks}, there exist plenty of planar graphs with girth $5$ that require five colors for a PCF coloring. In view of Theorem~\ref{out}, we are tempted to end the article by asking the following.

\begin{question}
\label{PCF4}
Does every planar graph $G$ of girth at least $6$ have $\PCF(G)\leq4$?
\end{question}

Notice that an affirmative answer to Question~\ref{PCF4} would imply the Four Color Theorem (4CT). Indeed, say $G$ is an arbitrary planar graph. Subdivide every edge of $G$ once, i.e., consider the complete subdivision $S(G)$. As girth $g(S(G))\ge6$, take a PCF $4$-coloring $c$ of $S(G)$. The restriction of $c$ to $V(G)$ is a proper $4$-coloring of $G$. 

A weaker form of Question~\ref{PCF4} could be asking whether every planar graph $G$ of girth at least $6$ has $\chi_o(G)\le4$. If answered in the affirmative, even this weaker form would imply 4CT. And a stronger form of Question~\ref{PCF4} (resp. its odd coloring variant) could be asking whether every planar graph $G$ of odd-girth at least $7$ has $\PCF(G)\leq4$ (resp. $\chi_o(G)\leq4$).


\bigskip

\noindent\textbf{Acknowledgments.} We wish to thank prof. J.-S.~Sereni for bringing to our attention the work of Liu and Yu~\cite{LiuYu13} on superlinear colorings of subcubic graphs.
This work is partially supported by ARRS P1-0383, J1-1692, J1-3002.

\bibliographystyle{plain}

\end{document}